\documentclass{article}

\pdfoutput=1

\usepackage{amsmath,amssymb,amsthm,dirtytalk,comment,float,bm,color,mathrsfs,graphicx,enumitem,natbib}
\RequirePackage[colorlinks,citecolor=blue,urlcolor=blue]{hyperref}

\newtheorem{theorem}{Theorem}[section]
\newtheorem{lemma}[theorem]{Lemma}
\newtheorem{corollary}[theorem]{Corollary}

\theoremstyle{definition}
\newtheorem{example}[theorem]{Example}

\theoremstyle{remark}
\newtheorem{remark}[theorem]{Remark}


\renewcommand{\P}{\mathbb{P}}
\newcommand{\E}{\mathbb{E}}
\newcommand{\F}{\mathscr{F}}
\newcommand{\G}{\mathscr{G}}

\newcommand{\RTP}{\mathrm{RTP}}
\newcommand{\HA}{\mathrm{HA}}

\title{Long-term behavior of casino games}

\author{
S. N. Ethier\thanks{Department of Mathematics, University of Utah. \href{mailto:ethier@math.utah.edu}{ethier@math.utah.edu}} 
\and 
L. Stefanello\thanks{\href{mailto:stefanello.gm@gmail.com}{stefanello.gm@gmail.com}. The author thanks H. A. Mimun for valuable discussions during the early stages of this work.}
}

\date{}

\begin{document}
\maketitle

\begin{abstract}
We study the asymptotic behavior of the ratio of total return (or total profit) to total amount bet in a casino game. While the limit is well understood when the sequence of wagers is independent and identically distributed, here we consider the case in which bet sizes vary over time and may depend on past outcomes. We propose a general framework that yields such results under mild conditions on the conditional expectations of bets, returns, and profits. The set-up applies to many casino games (including compound games and those in which wagers are not immediately resolved), expressing the long-term behavior in terms of intrinsic parameters, namely return to player (RTP) and house advantage (HA). As an application, we examine the roulette win documented in \citeauthor{L76}'s (\citeyear{L76}) \textit{Thirteen against the Bank} and attempt to quantify the likelihood that the story is true.
\end{abstract}

\section{Introduction}

Given a casino game, one of the most natural questions to ask, from the perspective of both the casino and the gambler, is how the total return (or the total profit) compares, in the long term, with the total amount bet.   This ratio not only measures how favorable or unfavorable a game is, but also quantifies to what extent it is in favor of one side or the other (typically, the casino).  In practice, it represents the proportion of the total amount bet that is returned to the gambler (or the gambler's profit as a fraction of the total amount bet).

Because of its importance, this type of question has been studied extensively, and two key parameters are traditionally used to describe it: the \textit{return to player} (RTP) and the \textit{house advantage} (HA).   The former is typically used for games, such as slot machines and video poker, in which payoffs are stated in terms of returns (e.g., 800 for 1 for a royal flush with a max-coin bet in several forms of video poker).  The latter is typically used for games, such as roulette and craps, in which payoffs are stated in terms of profits (e.g., 35 to 1 for winning a single-number bet in roulette).  Note the use of ``for'' (in 800 for 1) versus ``to'' (in 35 to 1).

To encompass both perspectives, we define a wager as a triple $(B, R, X)$ of random variables, where $B$ is the \textit{bet}, $R$ is the \textit{return}, and $X$ is the \textit{profit}.  Here $B$ and $R$ are nonnegative, and the formula relating the three random variables is
\[
    X = R - B,
\]
which implies that $X^- \le B$, where, by definition, $X^- := -\min(X,0)$. It is natural to assume that $R=X=0$ when $B=0$.

Given a sequence $(B_n, R_n, X_n)_{n\ge 1}$ of wagers, we are interested in the long-term behavior of the quantity
\[
 \mathfrak{R}_n := \frac{\sum_{k=1}^n R_k}{\sum_{k=1}^n B_k}
 \quad\text{or}\quad
 \mathfrak{X}_n := \frac{\sum_{k=1}^n X_k}{\sum_{k=1}^n B_k},
\]
that is, the ratio between total returns, or total profits, and total amounts bet.  
The two parameters associated with this study, given a wager $(B, R, X)$, are
\begin{equation}\label{RTP-HA}
    \RTP(B,R) := \frac{\E[R]}{\E[B]}
    \quad\text{and}\quad
    \HA(B,X) := -\frac{\E[X]}{\E[B]},
\end{equation}
defined whenever $0 < \E[B] < \infty$, $\E[R] < \infty$, and $\E[|X|] < \infty$.

The RTP gives the expected proportion of the bet that is returned to the gambler, while the HA expresses the expected proportion kept by the house.   Notice that
\begin{equation*}
\HA(B,X) = -\frac{\E[X]}{\E[B]}
= -\frac{\E[R - B]}{\E[B]}
= 1 - \frac{\E[R]}{\E[B]}
= 1 - \RTP(B,R);
\end{equation*}
this shows that the two approaches, usually followed separately, are actually equivalent.

In \citeauthor{E10} (\citeyear{E10}, Section 6.1), two definitions of the house advantage were given, differing in how ties are accounted for.  Equation~\eqref{RTP-HA} is consistent with the one that includes ties as possible outcomes; the other one regards ties as merely delays in the ultimate resolution of a wager, often described briefly as excluding ties.  For several reasons, it is preferable to include ties.

A first fundamental application of these quantities is that, when the same wager is made repeatedly and independently under identical  conditions, the ratio between total return and total amount bet converges almost surely to the RTP.  Equivalently, the ratio between total profit to the gambler and total amount bet converges almost surely to $-\HA$.  This is an immediate consequence of the strong law of large numbers.  But the underlying assumption of a sequence of i.i.d.\ wagers does not accurately model what happens in a casino.

Therefore, it is natural to relax the i.i.d.\ assumption.  This is precisely the goal of this work: to propose a general framework in which the same type of limit results can be obtained under more realistic assumptions, and in which the asymptotic behavior depends only on the structure of the game, with the gambler’s strategy influencing the limit only within the range permitted by intrinsic parameters.

Following some preliminary considerations in Section~\ref{sec:prelims}, we introduce the main theorem in Section~\ref{sec:main}, formulated in terms of conditional expectations.  We apply it in several scenarios.  First we consider simple games, with one bet (Section~\ref{sec:simple-one}) and multiple bets (Section~\ref{sec:simple-multiple}) per coup.  Bets in simple games depend only on the past and are resolved immediately.  Then we consider compound games, with one bet (Section~\ref{sec:compound-one}) and multiple bets (Section~\ref{sec:compound-multiple}) per coup.  Compound games, which allow additional bets to be made during the course of a coup, have been studied in \cite{D69}, \cite{Ca00}, and \citeauthor{E10} (\citeyear{E10}, Theorem~6.1.8).  The best-known examples are doubling and splitting in blackjack.  Bets in compound games depend on the past and present and are resolved immediately.  Finally, we consider games with future-dependence (Section~\ref{sec:future-dependence}).  Here bets need not be resolved immediately. The canonical example is craps.

We should also mention Section~\ref{sec:Leigh}, which provides a thorough discussion of the roulette system described in \citeauthor{L76}'s (\citeyear{L76}) \textit{Thirteen against the Bank}.  We attempt to quantify the likelihood that Leigh's dramatic story of an 800,000-franc win is true.

We conclude with two remarks.  First, we note that the definition of a wager $(B,R,X)$ is intentionally flexible: it may refer either to the bet, return, and profit of a single gambler, or to the total bet, total return, and total profit at the table (involving multiple gamblers) from the casino’s point of view.  The same theory applies in both interpretations.

In addition, while the general definition of a wager involves three random variables, as already mentioned we can often focus our attention on just two of them, depending on the game considered.

\section{Preliminaries}\label{sec:prelims}

We record here two results that will be used in the proof of the main theorem.  

The first one is the classical result of~\cite{Ch67} concerning the strong law of large numbers for martingales.

\begin{theorem}[Chow]\label{theorem:chow}
Let $(Y_n)_{n\ge 1}$ be a sequence of random variables adapted to a filtration $(\F_n)_{n\ge0}$ such that 
\[
\E[Y_n\mid \F_{n-1}]=0,\quad n\ge1.
\]
Consider the martingale $(S_n)_{n\ge 0}$ with respect to $(\F_n)_{n\ge0}$, where
$S_0:=0$ and $S_n:=Y_1+\cdots+Y_n$ for $n\ge1$.  Suppose that
    \[
        \sum_{k\ge 1} \frac{\E[Y_k^{2}]}{k^{2}} < \infty.
    \]
    Then
    \[
        \lim_{n\to\infty} \frac{S_n}{n} = 0\quad\text{a.s.}
    \]
\end{theorem}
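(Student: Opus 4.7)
The plan is to deduce the result from Doob's $L^2$ martingale convergence theorem combined with Kronecker's lemma, which is the standard route from a variance condition on martingale differences to a strong law.

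First I would introduce the auxiliary martingale $M_n := \sum_{k=1}^{n} Y_k/k$ (with $M_0 := 0$). Each $Y_k/k$ is $\F_k$-measurable and satisfies $\E[Y_k/k \mid \F_{k-1}] = 0$, so $(M_n)_{n\ge 0}$ is a martingale with respect to $(\F_n)_{n\ge 0}$. By orthogonality of martingale differences (obtained by conditioning on $\F_{k-1}$),
\[
\E[M_n^2] \;=\; \sum_{k=1}^{n} \frac{\E[Y_k^{2}]}{k^{2}} \;\le\; \sum_{k\ge 1} \frac{\E[Y_k^{2}]}{k^{2}} \;<\; \infty,
\]
so $(M_n)$ is bounded in $L^2$. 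Doob's martingale convergence theorem then yields a finite random variable $M_\infty$ with $M_n \to M_\infty$ almost surely.

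Next I would invoke Kronecker's lemma: if a real series $\sum_{k\ge 1} a_k$ converges and $b_n \uparrow \infty$, then $b_n^{-1}\sum_{k=1}^{n} b_k\, a_k \to 0$. Applied with $a_k := Y_k/k$ and $b_k := k$ on the full-measure event where $\sum_{k\ge 1} Y_k/k$ converges, this delivers $S_n/n \to 0$ almost surely.

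There is no deep obstacle here: the hypothesis forces each $Y_k \in L^2$ (otherwise a summand would already be infinite), so the orthogonality calculation is legitimate, and the two off-the-shelf results do the heavy lifting. The one modelling choice is the normalisation of the martingale differences by $1/k$; this is dictated by the weights $k^2$ in the denominator of the hypothesis and is exactly what makes Kronecker's lemma return the Ces\`aro-type conclusion.
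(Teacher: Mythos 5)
Your proof is correct. The paper does not prove this statement at all---it is quoted as a classical result of Chow (1967) and used as a black box---so there is no internal argument to compare against. Your derivation is the standard one: the weighted martingale $M_n=\sum_{k=1}^n Y_k/k$ is $L^2$-bounded by orthogonality of martingale differences (legitimate, since the hypothesis forces each $Y_k\in L^2$ and Cauchy--Schwarz makes the cross terms integrable before conditioning), Doob's convergence theorem gives almost sure convergence of $M_n$, and Kronecker's lemma converts this into $S_n/n\to 0$ almost surely. This is exactly the textbook route from a weighted variance condition to a martingale strong law, and every step is sound.
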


See \cite{Cs68} for an alternative formulation.

The second result needed is a simple but useful technical lemma that will allow us to determine the limiting behavior of ratios of sums when some of the summands become negligible.

\begin{lemma}\label{lemma:technical}
Let $(A_n)_{n\ge1}$ and $(A_n')_{n\ge1}$ be sequences of real numbers, and let $(B_n)_{n\ge1}$ and $(B_n')_{n\ge1}$ be sequences of nonnegative numbers.  Suppose that
\[
    \lim_{n\to\infty} A_n = 0, \quad \lim_{n\to\infty} B_n = 0, \quad\text{and}\quad \liminf_{n\to\infty} B_n' > 0.
\]
Then
\[
    \liminf_{n\to\infty}\frac{A_n'}{B_n'}
    =
    \liminf_{n\to\infty}\frac{A_n+A_n'}{B_n+B_n'}
    \le
    \limsup_{n\to\infty}\frac{A_n+A_n'}{B_n+B_n'}
    =
    \limsup_{n\to\infty}\frac{A_n'}{B_n'}.
\]
\end{lemma}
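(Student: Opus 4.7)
The plan is to decompose the combined ratio via
\[
\frac{A_n + A_n'}{B_n + B_n'}
= \frac{A_n}{B_n + B_n'} + \frac{B_n'}{B_n + B_n'}\cdot\frac{A_n'}{B_n'},
\]
which isolates the ``negligible'' contributions of $A_n$ and $B_n$ from the ``main'' term $A_n'/B_n'$. Setting $c_n := A_n/(B_n + B_n')$, $r_n := B_n'/(B_n + B_n')$, and $x_n := A_n'/B_n'$, this reads $c_n + r_n x_n$; the goal is to show $c_n \to 0$ and $r_n \to 1$, and then to conclude that $c_n + r_n x_n$ has the same $\limsup$ and $\liminf$ as $x_n$. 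Note that the middle inequality in the conclusion is automatic, so only the two outer equalities need proof.

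For the convergence claims: since $B_n \ge 0$ and $\liminf B_n' > 0$, the denominator $B_n + B_n'$ is eventually bounded below by a positive constant. Together with $A_n \to 0$, this forces $c_n \to 0$. For $r_n$, rewriting as $1/(1 + B_n/B_n')$ and noting that $B_n \to 0$ while $B_n'$ is eventually bounded below by a positive constant gives $B_n/B_n' \to 0$, hence $r_n \to 1$.

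From $c_n \to 0$, the sum $c_n + r_n x_n$ has the same $\limsup$ and $\liminf$ as $r_n x_n$, so the remaining task is to verify $\limsup r_n x_n = \limsup x_n$ and the analogous $\liminf$ identity. The hard part is that $x_n$ may well be unbounded, ruling out any naive product-rule appeal. I would handle it by a direct subsequence argument: since $r_n \to 1$, eventually $r_n \in [1/2, 2]$, so any subsequence of $x_n$ converging to some value in $[-\infty, \infty]$ gives a subsequence of $r_n x_n$ converging to the same value (using the lower bound $r_n \ge 1/2$ to handle $-\infty$), and, conversely, any subsequence of $r_n x_n$ converging to some value forces, upon division by $r_n$, the corresponding subsequence of $x_n$ to converge to the same value (using the upper bound $r_n \le 2$ to handle $-\infty$). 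This bidirectional correspondence between subsequences yields the required equality of $\limsup$'s and $\liminf$'s.
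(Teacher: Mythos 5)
Your proof is correct and takes essentially the same approach as the paper's: both isolate the main term $A_n'/B_n'$ and show that an additive perturbation tending to $0$ and a multiplicative (or, in the paper, divisive) factor tending to $1$ leave the $\limsup$ and $\liminf$ unchanged. The only differences are cosmetic --- the paper divides numerator and denominator by $B_n'$ and keeps the expression as a quotient, whereas you distribute the fraction into $c_n + r_n x_n$ and spell out the possibly-unbounded case via a subsequence argument that the paper leaves implicit.
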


\begin{remark}
(a) It may happen that $B_n'=0$ for finitely many $n$.

(b) Assuming $(A_n)_{n\ge1}$, $(A_n')_{n\ge1}$, $(B_n)_{n\ge1}$, and $(B_n')_{n\ge1}$ are sequences of random variables, if the hypotheses hold almost surely, then the conclusions hold almost surely.
\end{remark}

\begin{proof}
We prove the conclusion for the $\limsup$, the one for the $\liminf$ being analogous.  
Dividing numerator and denominator by $B_n'$ gives
\[
    \frac{A_n+A_n'}{B_n+B_n'}
    = \frac{\dfrac{A_n}{B_n'}+\dfrac{A_n'}{B_n'}}{\dfrac{B_n}{B_n'}+1}.
\]
Since $B_n' \ge b$ eventually for some $b>0$, we have $A_n/B_n' \to 0$ and $B_n/B_n' \to 0$. Therefore,
\[
    \limsup_{n\to\infty}\frac{A_n+A_n'}{B_n+B_n'}
    =
    \frac{\limsup_{n\to\infty}\bigg(\dfrac{A_n}{B_n'}+\dfrac{A_n'}{B_n'}\bigg)}
         {\lim_{n\to\infty}\bigg(\dfrac{B_n}{B_n'}+1\bigg)}
    = \limsup_{n\to\infty}\frac{A_n'}{B_n'}.
\qedhere
\]
\end{proof}

\section{Main result}\label{sec:main}

In the following, given a sequence of wagers $(B_n,R_n,X_n)_{n\ge1}$, we write
\[
 \mathfrak{R}_n := \frac{\sum_{k=1}^n R_k}{\sum_{k=1}^n B_k}
 \quad\text{and}\quad
 \mathfrak{X}_n := \frac{\sum_{k=1}^n X_k}{\sum_{k=1}^n B_k}
\]
for the random variables whose long-term behavior we are interested in.

We now state our main result. It consists of two parts.  The first, more technical in nature, explains how, in these ratios, we can replace the random variables with their expectations ``with respect to the past,'' given an appropriate filtration.  As these conditional expectations are in general much easier to describe and to evaluate, we can then find explicit answers to the questions we posed in the introduction. 

In the second part, we present explicit bounds on the long-term behavior of wagers, in terms of constants that represent the worst and best strategies for the gambler and, as we will see in the following examples, are determined by intrinsic parameters of the game. 

\begin{theorem}\label{theorem:main}
Let $(B_n,R_n,X_n)_{n\ge1}$ be a sequence of wagers, where $B_n$, $R_n$, and $X_n$ represent the bet, return, and profit at the $n$th coup.  Let $(\F_n)_{n\ge 0}$ be the filtration in which $\F_n$ is the $\sigma$-algebra of events whose occurrence or nonoccurrence is known following the $n$th coup, and assume that $(B_n)_{n\ge1}$, $(R_n)_{n\ge1}$, and $(X_n)_{n\ge1}$ are adapted.  Suppose further that
\[
\sup_{n\ge1}\E[B_n^2]< \infty, \qquad
\liminf_{n\to\infty}\frac{1}{n}\sum_{k=1}^n B_k>0\quad\text{a.s.}
\]
and $\sup_{n\ge1}\E[R_n^2]<\infty$, or equivalently, $\sup_{n\ge1}\E[X_n^2]<\infty$.

(a) Then 
\begin{align*}
\liminf_{n\to\infty}\frac{\sum_{k=1}^n \E[R_k\mid\F_{k-1}]}{\sum_{k=1}^n \E[B_k\mid\F_{k-1}]}
&=\liminf_{n\to\infty} \mathfrak{R}_n \\
&\le\limsup_{n\to\infty} \mathfrak{R}_n
=\limsup_{n\to\infty}\frac{\sum_{k=1}^n \E[R_k\mid\F_{k-1}]}{\sum_{k=1}^n \E[B_k\mid\F_{k-1}]}
\quad\text{a.s.}
\end{align*}
and 
\begin{align*}
\liminf_{n\to\infty}\frac{\sum_{k=1}^n \E[X_k\mid\F_{k-1}]}{\sum_{k=1}^n \E[B_k\mid\F_{k-1}]}
&=\liminf_{n\to\infty} \mathfrak{X}_n \\
&\le\limsup_{n\to\infty} \mathfrak{X}_n
=\limsup_{n\to\infty}\frac{\sum_{k=1}^n \E[X_k\mid\F_{k-1}]}{\sum_{k=1}^n \E[B_k\mid\F_{k-1}]}
\quad\text{a.s.}
\end{align*}

(b) Moreover, if there exist constants $\rho'$ and $\rho''$ such that\footnote{\label{note1}Here and in the following sections, we regard these inequalities as satisfied if the fractions between them are of the form 0/0.}
\[
    \rho'
    \le \frac{\E[R_n \mid \F_{n-1}]}{\E[B_n \mid \F_{n-1}]}
    \le \rho'' \quad \text{a.s.}
\]
for all $n\ge1$, or equivalently, constants $\chi'$ and $\chi''$ such that 
\[
    \chi'
    \le \frac{\E[X_n \mid \F_{n-1}]}{\E[B_n \mid \F_{n-1}]}
    \le \chi'' \quad \text{a.s.}
\]
for all $n\ge1$, then
\[    \rho' \le \liminf_{n\to\infty}\mathfrak{R}_n
    \le \limsup_{n\to\infty}\mathfrak{R}_n
    \le \rho''\quad \text{a.s.}
\]
and
\[
    \chi' \le \liminf_{n\to\infty}\mathfrak{X}_n
    \le \limsup_{n\to\infty}\mathfrak{X}_n
    \le \chi''
    \quad \text{a.s.}
\]
In particular, if $\rho'=\rho''=\rho$, or equivalently $\chi'=\chi''=\chi$, then 
\[
\lim_{n\to\infty}\mathfrak{R}_n = \rho\quad a.s.\qquad \text{and} \qquad \lim_{n\to\infty}\mathfrak{X}_n = \chi \quad \text{a.s.}
\]
\end{theorem}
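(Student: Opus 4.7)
The plan is to reduce each ratio in part (a) to the ratio of predictable sums $\sum \E[R_k\mid\F_{k-1}]/\sum \E[B_k\mid\F_{k-1}]$ by showing that the residuals (differences between a random variable and its $\F_{k-1}$-conditional expectation) contribute only $o(n)$ in Ces\`aro sense, and then invoking Lemma \ref{lemma:technical}. Part (b) is a summation argument built on top of part (a).

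For part (a), I would introduce the martingale-difference sequences
\[
W_k := R_k - \E[R_k\mid\F_{k-1}], \qquad Z_k := B_k - \E[B_k\mid\F_{k-1}],
\]
both adapted to $(\F_k)_{k\ge 0}$ with $\E[W_k\mid\F_{k-1}]=\E[Z_k\mid\F_{k-1}]=0$. By conditional Jensen, $\E[W_k^2]=\E[R_k^2]-\E[\E[R_k\mid\F_{k-1}]^2]\le \E[R_k^2]$ and similarly $\E[Z_k^2]\le\E[B_k^2]$, so the hypotheses $\sup_n\E[R_n^2]<\infty$ and $\sup_n\E[B_n^2]<\infty$ give $\sum_k\E[W_k^2]/k^2<\infty$ and $\sum_k\E[Z_k^2]/k^2<\infty$. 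Chow's theorem (Theorem \ref{theorem:chow}) then yields $n^{-1}\sum_{k=1}^n W_k\to 0$ and $n^{-1}\sum_{k=1}^n Z_k\to 0$ a.s. Writing
\[
\mathfrak{R}_n = \frac{\tfrac{1}{n}\sum_{k=1}^n W_k + \tfrac{1}{n}\sum_{k=1}^n\E[R_k\mid\F_{k-1}]}{\tfrac{1}{n}\sum_{k=1}^n Z_k + \tfrac{1}{n}\sum_{k=1}^n\E[B_k\mid\F_{k-1}]},
\]
I would apply Lemma \ref{lemma:technical} to these four sequences. Nonnegativity of $\tfrac{1}{n}\sum_{k=1}^n\E[B_k\mid\F_{k-1}]$ is immediate from $B_k\ge 0$, and the required $\liminf>0$ a.s.\ follows from the hypothesis $\liminf_n\tfrac{1}{n}\sum_{k=1}^n B_k>0$ a.s.\ together with $\tfrac{1}{n}\sum_{k=1}^n Z_k\to 0$ a.s., since these two quantities differ by $\tfrac{1}{n}\sum_{k=1}^n Z_k$. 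The lemma then produces the identifications of $\liminf\mathfrak{R}_n$ and $\limsup\mathfrak{R}_n$ claimed in part (a). The same reasoning handles $\mathfrak{X}_n$ after replacing $R_k$ by $X_k$; the equivalence $\sup_n\E[R_n^2]<\infty\iff\sup_n\E[X_n^2]<\infty$ follows from $X_k=R_k-B_k$ together with $\sup_n\E[B_n^2]<\infty$ and $(a-b)^2\le 2a^2+2b^2$.

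For part (b), I would clear denominators in the hypothesized ratio bound to obtain
\[
\rho'\,\E[B_k\mid\F_{k-1}] \le \E[R_k\mid\F_{k-1}] \le \rho''\,\E[B_k\mid\F_{k-1}] \quad\text{a.s.,}
\]
noting that this remains valid on the event $\{\E[B_k\mid\F_{k-1}]=0\}$: there $B_k=0$ a.s., so the standing assumption $R=X=0$ when $B=0$ forces $R_k=X_k=0$ and all three conditional expectations vanish, matching the $0/0$ convention of footnote \ref{note1}. Summing over $k=1,\dots,n$ and dividing by $\sum_{k=1}^n\E[B_k\mid\F_{k-1}]$, which is eventually a.s.\ positive by the $\liminf$ estimate already used in part (a), sandwiches the ratio of predictable sums between $\rho'$ and $\rho''$ eventually a.s. Combining with part (a) gives the bounds on $\liminf\mathfrak{R}_n$ and $\limsup\mathfrak{R}_n$; the argument for $\mathfrak{X}_n$ is verbatim after replacing $R,\rho',\rho''$ by $X,\chi',\chi''$. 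The main obstacle I foresee is cleanly handling the $0/0$ bookkeeping of the footnote and transferring the $\liminf$ hypothesis from $\tfrac{1}{n}\sum B_k$ to $\tfrac{1}{n}\sum\E[B_k\mid\F_{k-1}]$ — both essentially free once Chow is in hand, so the bulk of the work is really in verifying the Chow and lemma hypotheses.
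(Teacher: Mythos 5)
Your proposal is correct and follows essentially the same route as the paper: Doob's decomposition into martingale differences plus predictable sums, Chow's theorem to kill the Ces\`aro averages of the differences, transfer of the $\liminf$ hypothesis to the predictable bet sums, and Lemma~\ref{lemma:technical} to identify the limits, with part (b) then obtained by clearing denominators and summing. The only cosmetic difference is that you bound $\E[W_k^2]$ via the unconditional Pythagorean identity where the paper uses the conditional variance inequality; both are valid and interchangeable here.
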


\begin{proof}
The starting point is Doob’s decomposition.  Let $(Y_n)_{n\ge1}$ play the role of $(B_n)_{n\ge1}$, $(R_n)_{n\ge1}$, or $(X_n)_{n\ge1}$.  Then
\[
    \sum_{k=1}^n Y_k
    = \sum_{k=1}^n \bigl(Y_k - \E[Y_k \mid \F_{k-1}]\bigr)
      + \sum_{k=1}^n \E[Y_k \mid \F_{k-1}],
\]
and
\[
    \E\bigl[(Y_k - \E[Y_k \mid \F_{k-1}])^2\mid \F_{k-1}\bigr]
    \le \E[Y_k^2\mid \F_{k-1}].
\]
Hence
\[
\sup_{k\ge1} \E\bigl[(Y_k - \E[Y_k \mid \F_{k-1}])^2\bigr]
\le\sup_{k\ge1} \E[Y_k^2]<\infty.
\]
By Theorem~\ref{theorem:chow}, we deduce that
\[
    \lim_{n\to\infty}\frac{1}{n}\sum_{k=1}^n
    \bigl(Y_k - \E[Y_k \mid \F_{k-1}]\bigr) = 0\quad\text{a.s.}
\]

Applying this with $Y_k = B_k$, we obtain
\[
\frac{1}{n}\sum_{k=1}^n \E[B_k \mid \F_{k-1}]=\frac{1}{n}\sum_{k=1}^n B_k-\frac{1}{n}\sum_{k=1}^n \bigl(B_k - \E[B_k \mid \F_{k-1}]\bigr),
\]
so the assumption
\[
\liminf_{n\to\infty}\frac{1}{n}\sum_{k=1}^n B_k>0\quad\text{a.s.}
\]
implies
\[
\liminf_{n\to\infty}\frac{1}{n}\sum_{k=1}^n \E[B_k \mid \F_{k-1}]>0\quad\text{a.s.}
\]

For the return ratio, we can therefore write
\[
    \mathfrak{R}_n = \frac{\frac{1}{n}\sum_{k=1}^n (R_k - \E[R_k \mid \F_{k-1}])
          + \frac{1}{n}\sum_{k=1}^n \E[R_k \mid \F_{k-1}]}{\frac{1}{n}\sum_{k=1}^n (B_k - \E[B_k \mid \F_{k-1}])
          + \frac{1}{n}\sum_{k=1}^n \E[B_k \mid \F_{k-1}]}
\]
and apply Lemma~\ref{lemma:technical} with
\[
A_n := \frac{1}{n}\sum_{k=1}^n (R_k - \E[R_k \mid \F_{k-1}]),
\quad
A_n' := \frac{1}{n}\sum_{k=1}^n \E[R_k \mid \F_{k-1}],
\]
\[
B_n := \frac{1}{n}\sum_{k=1}^n (B_k - \E[B_k \mid \F_{k-1}]),
\quad
B_n' := \frac{1}{n}\sum_{k=1}^n \E[B_k \mid \F_{k-1}],
\]
to obtain the stated equalities and inequalities for $(\mathfrak{R}_n)_{n\ge1}$.  
The argument for $(\mathfrak{X}_n)_{n\ge1}$ is identical, with $R_k$ replaced by $X_k$.
\end{proof}

We conclude this section with a few comments concerning the assumptions of Theorem~\ref{theorem:main}.

\begin{remark}
(a) The filtration is indexed by coups for convenience; it could just as well be indexed by spins, rolls, rounds, etc.  
Excluding compound games and games with future dependence, bets are typically \textit{predictable}, so that $\E[B_n \mid \F_{n-1}] = B_n$ for every $n\ge1$. 

(b) The requirement
    \[
        \liminf_{n\to\infty} \frac{1}{n}\sum_{k=1}^n B_k > 0\quad\text{a.s.}
    \]
    ensures that the average amount wagered (averaged over time) does not vanish in the long term.  
    This is automatically satisfied if, for instance, a table minimum $\delta>0$ is imposed, which is standard in real casino play, and the bettor makes a bet at every coup.  One may also allow occasional zero bets, provided that the nonzero bets have positive density. 

(c) The assumptions on second moments,
    \[
\sup_{n \ge 1} \E[B_n^2] < \infty, \quad \sup_{n \ge 1} \E[R_n^2] < \infty,\quad\text{and}\quad\sup_{n \ge 1} \E[R_n^2] < \infty,
    \]
permit application of Theorem~\ref{theorem:chow}.
    In any real casino, bets, returns, and profits are uniformly bounded, so this condition is trivially satisfied.  
    A single wager may aggregate a random number of underlying components, so that $B_n$, $R_n$, and $X_n$ need not be uniformly bounded, yet their second moments remain bounded.
\end{remark}

\section{Simple games with one bet per coup}\label{sec:simple-one}

We begin with the case of \textit{simple} games, in which at each coup $n$ the gambler chooses a bet $B_n$, and a random outcome is observed.  
This outcome determines a return $R_n^{\circ}$ per unit bet and a profit $X_n^{\circ}$ per unit bet, so that the total return and profit of the wager are
\begin{equation*}
    R_n = B_n R_n^{\circ}
    \qquad\text{and}\qquad
    X_n = B_n X_n^{\circ}.
\end{equation*}
Thus $(B_n, R_n, X_n)$ is the wager placed at coup $n$.  
Let $(\F_n)_{n\ge 0}$ be the natural filtration, where $\F_n$ comprises all events whose occurrence or nonoccurrence is known following the $n$th coup.

In this simple setting,
\begin{itemize}
    \item the sequence $(B_n)_{n\ge1}$ is predictable with respect to $(\F_n)_{n\ge0}$;
    \item the sequence $(R_n^\circ)_{n\ge1}$ is i.i.d.\ and adapted to $(\F_n)_{n\ge0}$, and $R_n^\circ$ is independent of $\F_{n-1}$ for each $n\ge1$;
    \item the sequence $(X_n^\circ)_{n\ge1}$ is i.i.d.\ and adapted to $(\F_n)_{n\ge0}$, and $X_n^\circ$ is independent of $\F_{n-1}$ for each $n\ge1$.
\end{itemize}

We immediately obtain the following consequence of Theorem~\ref{theorem:main}.

\begin{corollary}\label{cor:basic}
    Under the above assumptions, suppose additionally that
\begin{equation}\label{bet-conditions}
\sup_{n \ge 1} \E[B_n^2] < \infty
\qquad\text{and}\qquad
\liminf_{n\to\infty}\frac{1}{n}\sum_{k=1}^n B_k>0\quad\text{a.s.}
\end{equation}

 If $\E[(R_1^\circ)^2] < \infty$, or equivalently, $\E[(X_1^\circ)^2] < \infty$, then
\[
\lim_{n\to\infty}\mathfrak{R}_n=\RTP(1,R_1^\circ)\quad\text{a.s.} \qquad \text{and} \qquad \lim_{n\to\infty}\mathfrak{X}_n=-\HA(1,X_1^\circ)\quad\text{a.s.}
\]
\end{corollary}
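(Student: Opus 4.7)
The plan is to derive the corollary as a direct specialization of Theorem~\ref{theorem:main}(b), taking $\rho'=\rho''=\E[R_1^\circ]$ and $\chi'=\chi''=\E[X_1^\circ]$. Since $\RTP(1,R_1^\circ)=\E[R_1^\circ]$ and $-\HA(1,X_1^\circ)=\E[X_1^\circ]$ by the definitions in \eqref{RTP-HA}, matching these constants will produce the claimed almost sure limits.

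First I will verify the remaining hypotheses of the main theorem. The liminf condition on the time averages of $B_n$ and the bound $\sup_n\E[B_n^2]<\infty$ are built into \eqref{bet-conditions}. For $R_n=B_n R_n^\circ$, I use that $B_n$ is $\F_{n-1}$-measurable while $R_n^\circ$ is independent of $\F_{n-1}$, so conditioning first on $\F_{n-1}$ yields
\[
\E[R_n^2]=\E\bigl[B_n^2\,\E[(R_n^\circ)^2\mid\F_{n-1}]\bigr]=\E[B_n^2]\,\E[(R_1^\circ)^2],
\]
and both factors are bounded in $n$. The identical computation handles $X_n$, and the identity $X_n^\circ=R_n^\circ-1$ makes the two second-moment hypotheses in the corollary equivalent, as claimed.

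The main step is the computation of the ratio of conditional expectations. Using the same predictability of $B_n$ and independence of $R_n^\circ$ from $\F_{n-1}$,
\[
\E[R_n\mid\F_{n-1}]=B_n\,\E[R_1^\circ]\qquad\text{and}\qquad \E[B_n\mid\F_{n-1}]=B_n.
\]
On $\{B_n>0\}$ the ratio equals the deterministic constant $\E[R_1^\circ]$; on $\{B_n=0\}$ both numerator and denominator vanish, which is precisely the $0/0$ case covered by footnote~\ref{note1}. Hence Theorem~\ref{theorem:main}(b) applies with $\rho'=\rho''=\E[R_1^\circ]$ and gives $\mathfrak{R}_n\to\RTP(1,R_1^\circ)$ a.s.; replacing $R_n^\circ$ by $X_n^\circ$ throughout yields the corresponding statement for $\mathfrak{X}_n$. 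I do not foresee any real obstacle here: the content of the corollary is simply that in the i.i.d.\ per-unit setting the conditional-expectation ratio collapses to a deterministic constant, so the sharpest case of Theorem~\ref{theorem:main}(b) applies directly; the only mildly delicate points are the standard factorization that uses predictability of $B_n$ together with independence of $R_n^\circ$, and the handling of the degenerate $\{B_n=0\}$ event via the convention of footnote~\ref{note1}.
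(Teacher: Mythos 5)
Your proposal is correct and follows exactly the route the paper intends: the paper presents the corollary as an immediate consequence of Theorem~\ref{theorem:main}, relying on the predictability of $B_n$ and the independence of $R_n^\circ$ from $\F_{n-1}$ to collapse the conditional-expectation ratio to the constant $\E[R_1^\circ]=\RTP(1,R_1^\circ)$, with the $\{B_n=0\}$ case absorbed by the $0/0$ convention of footnote~\ref{note1}. Your verification of the second-moment hypothesis via $\E[R_n^2]=\E[B_n^2]\,\E[(R_1^\circ)^2]$ simply makes explicit a detail the paper leaves to the reader.
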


\begin{remark}
Most modern games developed in the online casino industry naturally fit into the model described in this section. In particular,  every wager shares the same RTP and house advantage. This explains why, outside academic treatments, one typically refers to \say{the RTP and the HA of a game} rather than of individual wagers.
\end{remark}

\begin{remark}
There are several results closely related to Corollary 4.1 in the mathematical literature. For example, \cite{LW95} establish an almost-sure limit theorem for selection systems (those in which the bet sizes are \{0,1\}-valued). A result for more-general betting systems by the same authors appears in \cite{LW03} (see their Corollary 6), which under an i.i.d. assumption on the profits per unit bet reduces to a result effectively equivalent to our Corollary 4.1. We note that their approach does not rely on martingale methods.
\end{remark}

The following two examples give some insight into the assumed second-moment bound on the bet sizes.

\begin{example}[Martingale system without a betting limit]
Let $(X_n^\circ)_{n\ge1}$ be i.i.d.\ with
\[
\P(X_1^\circ=1)=p\quad\text{and}\quad
\P(X_1^\circ=-1)=1-p,\quad\text{where}\quad
0<p<\frac12,
\]
so that the underlying even-money proposition favors the house.  
Assume there is no house betting limit and define $B_1:=1$ and
\[
B_n:=
\begin{cases}
2B_{n-1}, &\text{if $X_{n-1}^\circ=-1$},\\[2mm]
1,        &\text{otherwise},
\end{cases}
\qquad n\ge2.
\]
The bettor begins with a bet of one unit, then doubles the stake after each loss and returns to a one-unit stake after each win.

Here $\sup_{n\ge1} \E[B_n^2]=\infty$ because $\E[B_n^2]\ge(2^{n-1})^2(1-p)^{n-1}\ge2^{n-1}$ for each $n\ge1$. 
Furthermore, the conclusion of Corollary~\ref{cor:basic} fails: although the house advantage is positive ($1-2p>0$), the bettor's cumulative profit becomes positive after each win.
\end{example}

\begin{example}[Martingale system with a betting limit]
Let $(X_n^\circ)_{n\ge1}$ be as in the previous example, and let $M\ge1$ be the house betting limit.  
Define $B_1:=1$ and
\[
B_n:=
\begin{cases}
2B_{n-1}, &\text{if $X_{n-1}^\circ=-1$ and $2B_{n-1}\le M$},\\[2mm]
1,        &\text{otherwise},
\end{cases}
\qquad n\ge2.
\]
Whenever the prescribed doubling would exceed the betting limit, the bettor returns to a one-unit stake.  
All bets are therefore bounded by $M$, so $\sup_{n \ge 1} \E[B_n^2] < \infty$, and the assumptions of Corollary~\ref{cor:basic} are satisfied.  
Hence the long-term ratio of total profit to total bet converges to minus the house advantage of the game, $2p-1$, almost surely.
\end{example}

\section{Simple games with multiple bets per coup}\label{sec:simple-multiple}

We continue to consider simple games, but now with $d\ge1$ distinct betting opportunities at each coup.  
Each opportunity behaves like an individual simple game, and the overall wager is obtained by combining these $d$ components.

The bet is now a vector
\[
\bm B_n = (B_{n,1},\ldots,B_{n,d}),
\]
and the returns per unit bet and profits per unit bet are vectors
\[
\bm R_n^{\circ}=(R_{n,1}^{\circ},\ldots,R_{n,d}^{\circ})
\quad \text{and}\quad
\bm X_n^{\circ}=(X_{n,1}^{\circ},\ldots,X_{n,d}^{\circ}).
\]
The total bet, total return, and total profit for coup $n$ are
\[
B_n=\bm B_n\bm\cdot\bm1, \quad R_n = \bm B_n \bm\cdot \bm R_n^{\circ},
\quad\text{and}\quad
X_n = \bm B_n\bm\cdot \bm X_n^{\circ}.
\]
Let $(\F_n)_{n\ge 0}$ be the natural filtration.

In this setting,
\begin{itemize}
    \item the sequence $(\bm B_n)_{n\ge1}$ is predictable with respect to $(\F_n)_{n\ge0}$;
    \item the sequence $(\bm R_n^\circ)_{n\ge1}$ is i.i.d.\ and adapted to $(\F_n)_{n\ge0}$, and $\bm R_n^\circ$ is independent of $\F_{n-1}$ for each $n\ge1$;
    \item the sequence $(\bm X_n^\circ)_{n\ge1}$ is i.i.d.\ and adapted to $(\F_n)_{n\ge0}$, and $\bm X_n^\circ$ is independent of $\F_{n-1}$ for each $n\ge1$.
\end{itemize}

Under these assumptions,
\begin{align*}
\frac{\sum_{k=1}^n \E[R_k\mid\F_{k-1}]}{\sum_{k=1}^n \E[B_k\mid\F_{k-1}]}&=\frac{\sum_{k=1}^n \bm B_k\bm\cdot\E[\bm R_1^\circ]}{\sum_{k=1}^n \bm B_k\bm\cdot\bm1},\\
\frac{\sum_{k=1}^n \E[X_k\mid\F_{k-1}]}{\sum_{k=1}^n \E[B_k\mid\F_{k-1}]}&=\frac{\sum_{k=1}^n \bm B_k\bm\cdot\E[\bm X_1^\circ]}{\sum_{k=1}^n \bm B_k\bm\cdot\bm1}.
\end{align*}

\begin{corollary}\label{cor:multiple_bets}
Under the above assumptions, suppose also that 
\[
\sup_{n \ge 1} \E[|\bm B_n|^2] < \infty\qquad\text{and}\qquad\liminf_{n\to\infty}\frac{1}{n}\sum_{k=1}^n \bm B_k\bm\cdot\bm1>0\quad\text{a.s.}
\]
If $\E[|\bm R_1^\circ|^2] < \infty$, or equivalently, $\E[|\bm X_1^\circ|^2] < \infty$, then
\begin{align*}
&\min_{1\le i \le d}\RTP(1,R_{1,i}^\circ)\le\liminf_{n\to\infty}\frac{\sum_{k=1}^n \bm B_k\bm\cdot\E[\bm R_1^\circ]}{\sum_{k=1}^n \bm B_k\bm\cdot\bm1}=\liminf_{n\to\infty}\mathfrak{R}_n\\
&\qquad\le\limsup_{n\to\infty}\mathfrak{R}_n=\limsup_{n\to\infty}\frac{\sum_{k=1}^n \bm B_k\bm\cdot\E[\bm R_1^\circ]}{\sum_{k=1}^n \bm B_k\bm\cdot\bm1}\le\max_{1\le i \le d}\RTP(1,R_{1,i}^\circ)\quad\text{a.s.}
\end{align*}
and
\begin{align*}
&-\max_{1\le i \le d}\HA(1,X_{1,i}^\circ)\le\liminf_{n\to\infty}\frac{\sum_{k=1}^n \bm B_k\bm\cdot\E[\bm X_1^\circ]}{\sum_{k=1}^n \bm B_k\bm\cdot\bm1}=\liminf_{n\to\infty}\mathfrak{X}_n\\
&\qquad\le\limsup_{n\to\infty}\mathfrak{X}_n=\limsup_{n\to\infty}\frac{\sum_{k=1}^n \bm B_k\bm\cdot\E[\bm X_1^\circ]}{\sum_{k=1}^n \bm B_k\bm\cdot\bm1}\le-\min_{1\le i \le d}\HA(1,X_{1,i}^\circ)\quad\text{a.s.}
\end{align*}
The limits exist under either of two additional conditions: 
\begin{itemize}
    \item If $\RTP(1,R_{1,1}^\circ)=\cdots=\RTP(1,R_{1,d}^\circ)$, or equivalently, $\HA(1,X_{1,1}^\circ)=\cdots=\HA(1,X_{1,d}^\circ)$, then
\[
\lim_{n\to\infty}\mathfrak{R}_n=\RTP(1,R_{1,1}^\circ)\quad\text{a.s.}
\]
and 
\[
\lim_{n\to\infty}\mathfrak{X}_n=-\HA(1,X_{1,1}^\circ)\quad\text{a.s.}
\]
\item If $\lim_{n\to\infty}\sum_{k=1}^n \bm B_k/\sum_{k=1}^n \bm B_k\bm\cdot\bm1=\bm w$ a.s., then
\[
\lim_{n\to\infty}\mathfrak{R}_n=\sum_{i=1}^d w_i\,\RTP(1,R_{1,i}^\circ)\quad\text{a.s.}
\]
and
\[
\lim_{n\to\infty}\mathfrak{X}_n=-\sum_{i=1}^d w_i\,\HA(1,X_{1,i}^\circ)\quad\text{a.s.}
\]
\end{itemize}
\end{corollary}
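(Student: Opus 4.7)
The plan is to reduce the multi-bet setting to an application of Theorem~\ref{theorem:main}, viewing the scalar total bet, total return, and total profit per coup, namely $B_n = \bm B_n \bm\cdot \bm 1$, $R_n = \bm B_n \bm\cdot \bm R_n^\circ$, and $X_n = \bm B_n \bm\cdot \bm X_n^\circ$, as the wager at coup $n$. First I verify the scalar hypotheses of Theorem~\ref{theorem:main} from the vector ones. The bound $\sup_n \E[B_n^2] < \infty$ is immediate from $B_n^2 \le d\,|\bm B_n|^2$ (Cauchy--Schwarz against $\bm 1$), and the liminf assumption on $(1/n)\sum_k B_k$ is given outright. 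For the second-moment bound on returns I use predictability of $\bm B_n$ together with the independence of $\bm R_n^\circ$ from $\F_{n-1}$ (and the fact that $\bm R_n^\circ$ has the same law as $\bm R_1^\circ$) and the pointwise Cauchy--Schwarz estimate $R_n^2 \le |\bm B_n|^2\,|\bm R_n^\circ|^2$, giving
\[
\E[R_n^2 \mid \F_{n-1}] \le |\bm B_n|^2\,\E[|\bm R_1^\circ|^2],
\]
and hence $\sup_n \E[R_n^2] \le \E[|\bm R_1^\circ|^2]\,\sup_n \E[|\bm B_n|^2] < \infty$.

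Next I identify the conditional expectations needed by Theorem~\ref{theorem:main}. Since $\bm B_n$ is $\F_{n-1}$-measurable while $\bm R_n^\circ$ and $\bm X_n^\circ$ are independent of $\F_{n-1}$ with the laws of $\bm R_1^\circ$ and $\bm X_1^\circ$,
\[
\E[B_n \mid \F_{n-1}] = \bm B_n \bm\cdot \bm 1, \quad \E[R_n \mid \F_{n-1}] = \bm B_n \bm\cdot \E[\bm R_1^\circ], \quad \E[X_n \mid \F_{n-1}] = \bm B_n \bm\cdot \E[\bm X_1^\circ].
\]
Substituting these into part~(a) of Theorem~\ref{theorem:main} yields the equalities in the corollary between $\liminf_n \mathfrak{R}_n$ (resp.\ $\limsup_n \mathfrak{R}_n$) and the stated ratio of sums, and likewise for $\mathfrak{X}_n$.

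For the min/max bounds, I observe that on the event $\{\bm B_n \bm\cdot \bm 1 > 0\}$ the per-coup ratio
\[
\frac{\E[R_n \mid \F_{n-1}]}{\E[B_n \mid \F_{n-1}]} = \sum_{i=1}^d \frac{B_{n,i}}{\sum_j B_{n,j}}\,\RTP(1, R_{1,i}^\circ)
\]
is a convex combination of the component RTPs, hence lies in $[\min_i \RTP(1,R_{1,i}^\circ),\ \max_i \RTP(1,R_{1,i}^\circ)]$; on the complementary event the ratio is of the form $0/0$ and is considered to satisfy the bounds by the footnote convention. Applying part~(b) of Theorem~\ref{theorem:main} with these two constants as $\rho'$ and $\rho''$ yields the outer bounds on $\mathfrak{R}_n$, and the profit version is analogous via $\HA = 1 - \RTP$.

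The two special cases then follow quickly. If all component RTPs agree, $\rho'=\rho''$ and part~(b) forces convergence to that common value. If instead $\sum_{k=1}^n \bm B_k / \sum_{k=1}^n \bm B_k \bm\cdot \bm 1 \to \bm w$ a.s., then
\[
\frac{\sum_{k=1}^n \bm B_k \bm\cdot \E[\bm R_1^\circ]}{\sum_{k=1}^n \bm B_k \bm\cdot \bm 1} = \sum_{i=1}^d \frac{\sum_{k=1}^n B_{k,i}}{\sum_{k=1}^n \bm B_k \bm\cdot \bm 1}\,\RTP(1,R_{1,i}^\circ) \longrightarrow \sum_{i=1}^d w_i\,\RTP(1,R_{1,i}^\circ)
\]
almost surely, which combined with the equality from part~(a) gives the stated limit. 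The only mildly delicate step is the second-moment estimate on $R_n$; everything else is bookkeeping on top of Theorem~\ref{theorem:main}.
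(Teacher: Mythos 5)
Your proposal is correct and follows essentially the same route as the paper: reduce to Theorem~\ref{theorem:main} via the scalar wager $(\bm B_n\bm\cdot\bm1,\ \bm B_n\bm\cdot\bm R_n^\circ,\ \bm B_n\bm\cdot\bm X_n^\circ)$, with the only nontrivial verification being the second-moment bound $\E[R_n^2\mid\F_{k-1}]\le|\bm B_n|^2\,\E[|\bm R_1^\circ|^2]$ by Cauchy--Schwarz, which is exactly the step the paper singles out. The remaining details you supply (the convex-combination form of the per-coup ratio and the two special cases) are the parts the paper declares ``immediate.''
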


\begin{proof}
The proof is immediate, except for the second-moment bound
\begin{align*}
\E[(R_k - \E[R_k \mid \F_{k-1}])^2\mid \F_{k-1}] 
    &\le \E[R_k^2\mid \F_{k-1}]\\
    &=\E[(\bm B_k\bm\cdot\bm R_k^\circ)^2\mid\F_{k-1}]\\
    &\le \E[|\bm B_k|^2|\bm R_k^\circ|^2\mid\F_{k-1}]\\
    &=|\bm B_k|^2\,\E[|\bm R_1^\circ|^2],
\end{align*}
which uses the Cauchy--Schwarz inequality. \qedhere
\end{proof}

\begin{example}[Single-zero roulette]
Each standard betting opportunity in roulette with a single 0 has house advantage $1/37$: the payoff odds are $36/m - 1$ to 1 when $m$ numbers are covered, and the success probability is $m/37$.  
Thus, the game is HA-invariant.  
Regardless of how the gambler distributes bets across opportunities, the long-term profit-to-bet ratio satisfies
\[
\lim_{n\to\infty}\mathfrak{X}_n = -\frac{1}{37}\quad\text{a.s.}
\]
\end{example}

\begin{example}[Single-zero roulette with \textit{partager}]
Under the \textit{partager} rule, even-money bets return half the stake if zero appears.  
This yields a house advantage of $1/74$ for even-money bets and $1/37$ for all other bets.  
The game is therefore not HA-invariant.

Corollary~\ref{cor:multiple_bets} gives
\[
    -\frac{1}{37}
   \le\liminf_{n\to\infty}\mathfrak{X}_n\le
     \limsup_{n\to\infty}\mathfrak{X}_n\le
    -\frac{1}{74}\quad\text{a.s.},
\]
and the precise limit, if it exists, depends on the gambler’s long-term allocation to even-money bets versus other betting opportunities.
\end{example}

\begin{example}[Slot machines and multiple paylines]
Slot machines often allow the gambler to choose how many paylines to activate before each spin.  
Each activated line behaves as a distinct betting opportunity with its own return distribution, and its own RTP, say $\rho$, which is constant among paylines. 

Regardless of how the number of active lines varies from spin to spin, we obtain
\[
\lim_{n\to\infty}\mathfrak{R}_n=\rho\quad\text{a.s.}
\]
\end{example}

\section{Interlude: The Leigh roulette system}\label{sec:Leigh}

In 1966 a team of 13 English men and women set out to beat the roulette tables at the Casino Municipale in Nice, France, using a betting system known as the reverse Labouchere.  They won 800,000 francs (US\$163,000) in eight days and were barred from further play.  Their story is documented in a book (said to be nonfiction) by Norman \cite{L76} titled \textit{Thirteen against the Bank}. 

We use this incident to illustrate Corollary~\ref{cor:multiple_bets} concerning the asymptotic ratio of total profit to total amount bet in a sequence of coups with multiple bets per coup.  We point out that a careful reading of Leigh's book reveals that his claim of an 800,000-franc win is deceptive.   We further assess the likelihood that such a win could have occurred by chance, using Monte Carlo simulation and a Poisson approximation. 

We are of course dealing with a French roulette wheel with a single zero and the \textit{en prison} rule (\citeauthor{L76}, \citeyear{L76}, p.~30\,/\,p.~35),\footnote{The first page number refers to the British edition and the second to the American edition.} which applies to the six even chances at roulette, namely red, black, odd, even, low, and high (or rouge, noir, impair, pair, manque, and passe).  There are several implementations of this rule in the gambling literature.  The one we adopt is simple and natural.  Following a zero spin, the next nonzero spin determines the outcome, a tie if the even chance on which the gambler bet is hit, a loss otherwise.

Since we are concerned in this section only with the Leigh system, we model roulette as a game with six betting opportunities, and for each of them the profit per unit bet has the distribution
\begin{equation}\label{Xcirc-dist}
X_1^\circ=\begin{cases}
1 & \text{with probability $\tfrac{18}{37}$},\\[4pt]
0 & \text{with probability $\tfrac{1}{2}\cdot\tfrac{1}{37}$},\\[4pt]
-1 & \text{with probability $\tfrac{18}{37}+\tfrac{1}{2}\cdot\tfrac{1}{37}$}.
\end{cases}
\end{equation}
The mean is $-1/74$, hence the house advantage is $1/74$, or about 1.351\%.

We model the game as a sequence of independent and identically distributed coups.  A coup comprises a single spin if it is nonzero.  If a zero appears, the next nonzero spin completes the coup.  (Thus, a coup comprises 37/36 spins on average.)  While awaiting resolution of a coup, no further bets are made.  This model accommodates the Leigh system but not an arbitrary roulette system.  

To describe the Leigh system in more detail, we recall the betting system known as the \textit{reverse Labouchere}.  The Labouchere system (direct and reverse) has been the subject of several mathematical studies since the Leigh book appeared (\citeauthor{D80}, \citeyear{D80}; \citeauthor{E08}, \citeyear{E08}; \citeauthor{HW19}, \citeyear{HW19}).  It applies to any even-money proposition, including any of the six even chances at roulette.  The gambler keeps a scorecard with a list of positive numbers that is updated after each coup.  The initial list can be arbitrary, but a popular choice (and the choice of the Leigh team) is $1,2,3,4$, which we will assume hereafter.  Given such a list, the next bet is always the sum of the first and last terms of the list (unless there is only one term, in which case it is just that term).  After each coup, the list is updated as follows:
\begin{itemize}
\item After a win, the amount just won (or, equivalently, the amount just bet) is appended to the list as a new last term.
\item After a loss, the first and last terms are canceled.
\item After a tie, there is no change to the list.
\end{itemize}
The system is aborted if the list becomes empty or if a bet exceeding the house maximum of $M$ units is called for.  
In either case, the system is instantly restarted with $1,2,3,4$ and a bet of 5.

To understand better what happens when the reverse Labouchere betting system is aborted, consider a specified even chance.  Let $X_1^\circ,X_2^\circ,\ldots$ be i.i.d.\ as in \eqref{Xcirc-dist}, with $X_n^\circ$ representing the profit per unit bet on that chance at the $n$th coup.  Let $B_n$ be the amount bet on that chance at the $n$th coup, let $F_n$ be the gambler's cumulative profit from that chance after the $n$th coup, and let $S_n$ be the sum of the terms on the gambler's list for that chance after the $n$th coup.  Then
\[
F_n-F_{n-1}=B_n X_n^\circ\quad\text{and}\quad S_n-S_{n-1}=B_n X_n^\circ,
\]
Thus, $F_n-S_n=F_{n-1}-S_{n-1}=\cdots=F_0-S_0=-10$ since $F_0=0$ and $S_0=10$, and we have
\begin{equation}\label{cum-profit}
F_n=S_n-10,\qquad 0\le n\le T, 
\end{equation}
where $T$ is the first time the system is aborted (either because the list becomes empty or because the system calls for a bet greater than $M$).  Therefore, the gambler can expect frequent small losses ($F_T=-10$ if $S_T=0$) and rare large wins ($F_T=S_T-10$ if the next bet called for is greater than $M$, which implies $S_T>M$).

Leigh’s team had six gamblers simultaneously playing reverse Laboucheres on the same wheel, one on each of the six even chances.  The house betting limits were a minimum of 5 francs and a maximum of 2600 francs, so we take the franc as our betting unit and $M=2600$.  The six gamblers would play for six hours, then be replaced by six other gamblers for the next six hours (the casino operated 12 hours per day, from 3 p.m.\ to 3 a.m.)

A minor technical issue arises in that bets of size 3 or 4, which are below the house minimum, may be called for.  Although Leigh does not address this directly, he does so indirectly when suggesting how to proceed when London's Regency, which the Leigh team used for a practice session, raised the minimum betting limit from two to ten shillings (\citeauthor{L76}, \citeyear{L76}, p.~115\,/\,p.~140).  His solution is to require that, in such cases, the gambler does not bet, but still regards the bet as 3 or 4 for the purpose of the system.  For example, if the first three coups result in a win followed by two losses, the list, starting at $1,2,3,4$, goes to $1,2,3,4,5$, then to $2,3,4$, and then to 3.  The gambler then bets 0 (not 3).  If he ``wins,'' the new list is $3,3$ (not $3,0$) and a bet of 6 follows.  If he ``loses,'' the list becomes empty and the system restarts with $1,2,3,4$ and a bet of 5.

Corollary~\ref{cor:multiple_bets} tells us that the long-term ratio of total profit (for the whole team) to total amount bet converges almost surely to $-1/74$ as the number of coups tends to infinity.  This implies that the system must fail in the long term.  In particular, it cannot be a consistent winner.

But in Leigh's scenario, the number of coups was about 30 per hour, or roughly 360 per day, and thus about 2880 over eight days.  This does not qualify as the ``long term.''  Could the Leigh team have had a lucky run, achieving their success by chance?  To shed some light on this, we first need to be more precise about what the Leigh team achieved.  Table~\ref{tab:Leigh-data} does this.  As we noted, the reverse Labouchere bettor sees frequent small losses equal to 10 and rare large wins equal to the sum of the terms on the gambler's list at the time the system is aborted, less 10 (cf.~\eqref{cum-profit}).  The table records the large wins (called winning progressions), of which there were 27, and disregards the many small losses (called losing progressions).  Thus, the 800,000-franc win was the sum of the wins from the 27 winning progressions, but this was not the profit because the sum of the losses from the losing progressions was not accounted for.  

One might object that maybe the reported wins included the many small losses and can therefore be regarded as profits.  But this can be seen to be false from Leigh's own words (\citeauthor{L76}, \citeyear{L76}, p.~151\,/\,p.~185).  Referring to Mr.~Hopplewell's win on day 3, he wrote,

\begin{table}[H]
\caption{The winning progressions during the Leigh team's eight days at Casino Municipale, as documented in \textit{Thirteen against the Bank}.\label{tab:Leigh-data}}
\catcode`@=\active\def@{\phantom{0}}
\begin{center}
\begin{tabular}{cccccc}
day & no. & bettor & even   & amount won & page reference \\
    &  &        & chance & (in francs)  & (Brit.\,/\,Amer.~ed.) \\
\noalign{\smallskip}\hline\noalign{\smallskip}
1 & @1 & Mrs.\ Heppenstall & even & @49,000 & p.~22\,/\,p.~25 \\
  & @2 & Mr.\ Hopplewell   & black & @29,000 & p.~22\,/\,p.~25 \\
  & & subtotal           &       & @78,000 & p.~23\,/\,p.~26 \\
\noalign{\smallskip}\hline\noalign{\smallskip}
2 & @3 & Mr.\ Robinson & black & @57,950 & p.~139\,/\,p.~171 \\
  & @4 & Mrs.\ Richardson & red & @39,375 & p.~141\,/\,p.~173 \\
  & @5 & Mr.\ Hopplewell & even & @33,125 & p.~142\,/\,p.~175 \\
  & & subtotal         &      & 130,450 & p.~142\,/\,p.~175 \\
\noalign{\smallskip}\hline\noalign{\smallskip}
3 & @6 & Mr.\ Hopplewell &  black & @10,735 & p.~151\,/\,p.~185 \\
  & @7 & Mr.\ Milton & high & @21,680 & p.~151\,/\,p.~186 \\
  & @8 & Mrs.\ Richardson & red & @33,750 & p.~152\,/\,p.~187 \\
  & & subtotal & & @66,165 & p.~155\,/\,p.~190 \\
\noalign{\smallskip}\hline\noalign{\smallskip}
4 & @9 & Mr.\ Nathan & low & @40,300 & p.~159\,/\,p.~196 \\
  & 10 & Mr.\ Milton & high & @26,700 & p.~160\,/\,p.~197 \\
  & & subtotal & & @67,000 & p.~161\,/\,p.~198 \\
\noalign{\smallskip}\hline\noalign{\smallskip} 
5 & 11 & Mrs.\ Harper-Biggs & odd & ? & p.~170\,/\,p.~209 \\
  & 12 & Mrs.\ Richardson & red & ? & p.~171\,/\,p.~210 \\
  & & subtotal &  & @55,385 & p.~172\,/\,p.~211 \\
\noalign{\smallskip}\hline\noalign{\smallskip}   
6 & 13 & Mr.\ Nathan & low & @27,475 & p.~173\,/\,p.~213 \\
  & 14 & Mr.\ Milton & high & @32,000 & p.~175\,/\,p.~215 \\
  & 15 & Mrs.\ Heppenstall & even & @11,768 & p.~176\,/\,p.~217 \\
  & 16 & Mr.\ Fredericks & black & @51,760 & p.~177\,/\,p.~218 \\
  & & subtotal & & 123,003 & p.~177\,/\,p.~218 \\
\noalign{\smallskip}\hline\noalign{\smallskip}  
7 & 17 & Mrs.\ Harper-Biggs & odd & @15,645 & p.~180\,/\,p.~222 \\
  & 18 & Mrs.\ Harper-Biggs & odd & ? & p.~181\,/\,p.~223 \\
  & 19 & Mr.\ Leigh & odd & ? & p.~181\,/\,p.~224 \\ 
  & 20 & Mr.\ Vincent & low & @48,640 & p.~182\,/\,p.~225 \\
  & 21 & Mr.\ Fredericks & black & @12,895 & p.~182\,/\,p.~225 \\
  & 22 & Mr.\ Fredericks & black & @26,500 & p.~183\,/\,p.~226 \\
  & & subtotal &  & 159,660 & p.~183\,/\,p.~226 \\
\noalign{\smallskip}\hline\noalign{\smallskip} 
8 & 23 & Mr.\ Nathan & low & \smash{\raisebox{-5pt}{@37,000}} & \smash{\raisebox{-5pt}{p.~186\,/\,p.~230}} \\
  & 24 & Mr.\ Hopplewell & black &  &  \\
  & 25 & Mr.\ Hopplewell & black & @18,390 & p.~187\,/\,p.~231 \\
  & 26 & Mr.\ Robinson &  ?   & @47,665 & p.~187\,/\,p.~232 \\
  & 27 & Mr.\ Sherlock & even & @16,540 & p.~188\,/\,p.~232 \\
  & & subtotal       &      & 119,595 & p.~188\,/\,p.~232 \\
\noalign{\smallskip}\hline\noalign{\smallskip}
  & & total          &      & 799,258 & \\
\end{tabular}
\end{center}
\end{table}

\begin{quotation}
\noindent Hopplewell lost again. His line now read 526, 633, 740, 847, 978, 1109, 1240, 1399, 1554, 1709.  His next stake was, therefore, $526+1709=2235$.

This time he won.  His next stake would have been $526+2235=2761$, over the limit.  The little adverse sequence at the end had more or less halved his winnings, which were, if you care to add up the last list of figures, 10,735 francs.
\end{quotation}

Here Leigh carelessly neglected to add the 2235 term to the list (from the win at the start of the second quoted paragraph), whose sum should be $10{,}735+2235=12{,}970$.  (He also neglected to subtract 10 from the sum, as in \eqref{cum-profit}, but this has little effect.)  In any case, the main point is that the many small losses were completely neglected, perhaps to make the win seem more impressive than it really was.

To assess the likelihood of Leigh's story, we focus on his claim that there were 27 winning progressions.  We simulated eight days of 360 coups per day, replicating the experiment one million times (hence 2.88 billion coups).  The program, written in \textit{Mathematica}, is straightforward.  We note that an algorithm for determining whether a number in the range 1--36 is red or black is given in \citeauthor{E10} (\citeyear{E10}, p.~462, footnote 1).

Figure~\ref{fig:histogram}
provides a histogram of the ratio of total profit to total amount bet (over the eight-day session), showing a considerable amount of variation.  Moreover, a proportion 0.267993 of the sessions were profitable.  Other relevant statistics from the simulation are shown in Table~\ref{tab:Leigh-results}.

Progressions are either winning, losing, or incomplete.  An incomplete progression usually occurs for each of the even chances at the end of each day (but occasionally a progression is completed precisely at coup 360).  The simulated numbers from Table~\ref{tab:Leigh-results} are consistent with Corollary~\ref{cor:multiple_bets} since
\begin{equation}\label{consistency}
\frac{12{,}227.812000-20{,}502.704884+2691.843352}{413{,}287.742596}\approx-0.0135089,
\end{equation}
with the theoretical limit being $-0.0135135\cdots$. 

There are several ways to see that the Leigh team's reported results are anomalous.
First, the largest number of winning progressions in our one million simulated replications of the Leigh experiment was 9, only one-third of the 27 Leigh reported.  Second, the total amount won during the winning progressions, in the one million simulated replications, was never more than 102,000 francs, much less than the 800,000 francs Leigh reported.  Third, the average amount won during a single winning progression, during our one million simulated replications, was 8236.07 francs, which Leigh's team exceeded all 27 times.  Thus, for several related reasons, we can say that the chance that the Leigh experiment occurred as stated is less that one in a million, but this greatly understates how unusual Leigh's results were.

\begin{figure}[htb]
\begin{center}
\includegraphics[width=4in]{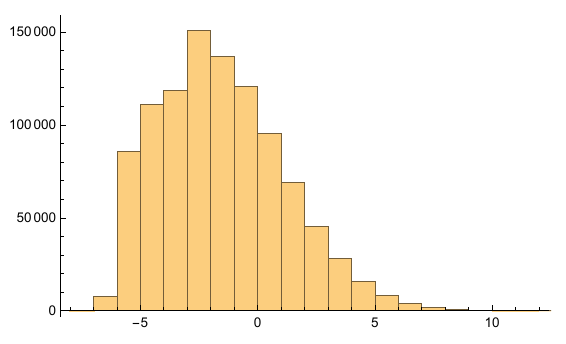}
\end{center}
\caption{\label{fig:histogram}Histogram for the ratio of total profit to total amount bet (in percentage terms) in eight 360-coup days of the Leigh system, based on one million simulated replications.  The mean number is about $-1.351$.}
\end{figure}

\begin{table}[H]
\caption{Various statistics of the Leigh system over eight days with 360 coups per day (from a simulation with one million replications), and the values observed by Leigh's team in 1966.  N.A. means ``not available.''\label{tab:Leigh-results}}
\catcode`@=\active\def@{\phantom{0}}
\tabcolsep=0.5mm
\begin{center}
\begin{tabular}{lcc}
\hline\noalign{\smallskip}
statistic & simulation & Leigh team's \\
          & estimate   & observation  \\
\noalign{\smallskip}\hline\noalign{\smallskip}
number of winning progressions & 1.484665 & 27 \\
amount won per winning progression & 8236.074805 & 29,602.1 \\
total amount won from winning progressions & 12,227.812000 & 799,258 \\
number of losing progressions & 2055.310293 & N.A. \\
amount lost per losing progression & 9.975479 & N.A. \\
total amount lost from losing progressions & 20,502.704884  & N.A. \\
number of incomplete progressions & 42.214352 & N.A. \\
amount won per incomplete progression & 63.766071 & N.A. \\
total amount won from incomplete progressions & 2691.843352 & N.A. \\
total amount bet & 413,287.742596 & N.A. \\
\noalign{\smallskip}\hline
\end{tabular}
\end{center}
\end{table}

Another benefit of the simulation is that it allows us to estimate the distribution of the number $N$ of winning progressions in an eight-day session (with 360 coups per day) of the Leigh system.  We notice that $N$ is approximately Poisson distributed.  This is made plausible by \citeauthor{A89}'s (\citeyear{A89}) Poisson clumping heuristic.  Of course, it is not exactly Poisson because it has finite support.\footnote{We conjecture that the maximum possible number of winning progressions in eight 360-coup days is 144.  A winning progression can be achieved in 58 coups with the sequence $(\text{WWL})^{19}\text{W}$ of wins and losses.  The initial list $(a_1,a_2,a_3,a_4)=(1,2,3,4)$ becomes $(a_{20},a_{21},a_{22},a_{23},a_{24})=(657,907,1252,1728,2385)$, resulting in a win of 6919.  Here the sequence $(a_n)_{n\ge1}$ is Fibonacci-like and given by $a_n=a_{n-1}+a_{n-4}$ for each $n\ge5$.  This is sequence number \href{https://oeis.org/A003269}{A003269} (shifted) in the \textit{Online Encyclopedia of Integer Sequences}.  Six successive such progressions fit in a 360-coup day, hence 48 in an eight-day session.  Finally, this conclusion applies to any three of the six even chances.  For example, red, odd, and low win if one of the numbers 1, 3, 5, 7, and 9 appears, and lose if one of the numbers 20, 22, 24, 26, and 28 appears.  Thus, we have $48\times3=144$ winning progressions, each winning 6919, for a total win of 996,336.  This is the analogue of Leigh's 800,000 --- we have not accounted for the losing or incomplete progressions, which in this contrived scenario contribute to total profit in a relatively insignificant way.}  But a Poisson random variable $N_0$, with parameter equal to the sample mean $\mu_0=1.484665$, does appear to be a reasonable approximation (see Table~\ref{tab:distrib-of-no-of-wins}), even though, because of the large sample size ($n=1{,}000{,}000$), the sample does not come close to passing a chi-squared goodness-of-fit test.

\begin{table}[htb]
\caption{Distribution of the number $N$ of winning progressions in an eight-day session (with 360 coups per day) of the Leigh system, estimated from a simulation with one million replications.  The Poisson random variable $N_0$ with parameter $\mu_0=1.484665$ is a first approximation to $N$, and the Poisson random variable $N_1$ with parameter $\mu_1=1.51$ is plausibly a stochastic upper bound for $N$. \label{tab:distrib-of-no-of-wins}}
\catcode`@=\active\def@{\phantom{0}}
\tabcolsep=1.5mm
\begin{center}
\begin{tabular}{ccccccccc}
\hline\noalign{\smallskip}
$n$ & simulation & standard & Poisson &@& simulation & standard & Poisson \\
    & estimate of & error of & approx. && estimate of & error of & bound \\
    & $\P(N=n)$ & estimate & $\P(N_0=n)$ && $\P(N\ge n)$ & estimate & $\P(N_1\ge n)$ \\
\noalign{\smallskip}\hline\noalign{\smallskip}
0 & 0.223507 & 0.000417 &  0.226578  && 1.000000 & 0.000000 & 1.000000 \\
1 & 0.337618 & 0.000473 &  0.336393  && 0.776493 & 0.000417 & 0.779090 \\
2 & 0.253068 & 0.000435 &  0.249715  && 0.438875 & 0.000496 & 0.445516 \\
3 & 0.124061 & 0.000330 &  0.123581  && 0.185807 & 0.000389 & 0.193668 \\
4 & 0.044806 & 0.000207 &  0.045869  && 0.061746 & 0.000241 & 0.066904 \\
5 & 0.013036 & 0.000113 &  0.013620  && 0.016940 & 0.000129 & 0.019051 \\
6 & 0.003160 & 0.000056 &  0.003370  && 0.003904 & 0.000062 & 0.004599 \\
7 & 0.000608 & 0.000025 &  0.000715  && 0.000744 & 0.000027 & 0.000962 \\
8 & 0.000116 & 0.000011 &  0.000132  && 0.000136 & 0.000012 & 0.000177 \\
9 & 0.000020 & 0.000004 &  0.000022  && 0.000020 & 0.000004 & 0.000029 \\
\noalign{\smallskip}\hline
\end{tabular}
\end{center}
\end{table}

But instead of trying to find a Poisson random variable that provides a good fit, it will be more useful to find one that provides a stochastic upper bound.  That is, we want a Poisson random variable $N_1$ such that
\[
\P(N\ge n)\le \P(N_1\ge n),\qquad n\ge0.
\]
This can be achieved with parameter $\mu_1$ a bit larger than $\mu_0$, say $\mu_1=1.51$.  Table~\ref{tab:distrib-of-no-of-wins} confirms the stochastic bound, with at least two standard errors to spare for each $n$.  This leads to the (plausible but not proved) conclusion that
\[
\P(N\ge27)\le\P(N_1\ge27)<1.5\times10^{-24}.
\]
We can therefore rule out the possibility that the Leigh team's success occurred as reported and by chance.  We conclude that Leigh's book is a work of fiction.  A very entertaining book to be sure, but a fictional one.  

Our conclusion matches that of \cite{K18}, who used a different argument.  He simulated the Leigh system for 1.5 billion spins and found that the ratio of total profit to total amount bet was very close to its theoretical limit, just as we did in \eqref{consistency}. In effect, he used Monte Carlo simulation to show that the Leigh system cannot be profitable in the long term, which we deduced more reliably from Corollary~\ref{cor:multiple_bets}.  But Kendall did not seriously address the question of whether the Leigh team's success could have been the result of chance.  He simulated a day's play (360 spins) 30 times (not 30,000 or 30 million, but 30).  Also, Kendall ignored the \textit{en prison} rule and assumed the house maximum to be 4000 francs instead of Casino Municipale's 2600 francs.  (The number 4000 was the house maximum, in shillings, at London's Regency.)

\section{Compound games with one bet per coup}\label{sec:compound-one}

Here we would like to make one further generalization, allowing for compound games.  A \textit{compound game} (in the context of gambling, not game theory) is a game that allows additional bets, proportional to the original one, during the course of the game.  The best-known examples are doubling and splitting in blackjack, free odds bets in craps, and various proprietary poker-like games in which there is an ante bet and then options for additional bets under certain conditions.  One of the simpler examples of the latter is Three Card Poker, which has been a popular game since the early 1990s.

\begin{example}[Three Card Poker]\label{example:poker} Three Card Poker is played with a standard 52-card deck, which is reshuffled between coups.  Here we consider only the principal wager, the so-called ante-play wager.  Each gambler makes an \textit{ante wager} and receives three cards face down. The dealer also receives three cards face down.  Each gambler, after examining his hand, must decide whether to fold or play.  If he folds, he loses his ante wager.  If he plays, he must make an additional wager, called a \textit{play wager}, equal in size to the ante wager.  The dealer then reveals his hand to determine whether it \textit{qualifies} with a rank of queen high or better.  If the dealer's hand fails to qualify, the gambler's ante wager is paid even money and his play wager is pushed.  If the dealer's hand qualifies, it is compared with the gambler's hand.  If the gambler's hand outranks the dealer's hand, both of the gambler's wagers are paid even money.  If the dealer's hand outranks the gambler's hand, both of the gambler's wagers are lost.   If the gambler's hand and the dealer's hand are equal in rank, both of the gambler's wagers are pushed.

The ranking of hands in Three Card Poker is straight flush, three of a kind, straight, flush, one pair, and no pair.  There is one remaining detail called the \textit{ante bonus}.  Regardless of whether the gambler plays, regardless of whether the dealer qualifies, and regardless of the outcome of the ante-play wager, if the gambler's hand is a straight or better, he receives a bonus payoff on his ante wager.  This bonus amounts to 1 to 1 for a straight, 4 to 1 for three of a kind, and 5 to 1 for a straight flush.  This completes our description of the game.  See \citeauthor{E10} (\citeyear{E10}, Section~16.2) for more detail.

It can be shown that it is optimal to play with unsuited Q-6-4 or better (that is, these are the hands for which the play wager has nonnegative conditional expectation).  If the gambler adopts this strategy, a one-unit initial bet results in a bet of size
\[
B_1^\circ=\begin{cases}1&\text{with probability $\dfrac{72}{221}$},\\ 2&\text{otherwise},\end{cases}
\]
with expectation
\[
\E[B_1^\circ]=\frac{370}{221}\approx1.674208.
\]
Furthermore, the corresponding expected profit to the gambler is
\[
\E[X_1^\circ]=-\frac{35{,}253{,}012}{\binom{52}{3}\binom{49}{3}}+\frac{1168}{\binom{52}{3}}=-\frac{686{,}689}{20{,}358{,}520}\approx-0.033730.
\]
One can therefore define the house advantage as
\[
\HA(B_1^\circ,X_1^\circ)=-\frac{\E[X_1^\circ]}{\E[B_1^\circ]}=\frac{686{,}689}{34{,}084{,}400}\approx0.020147.
\]
\end{example}

To account for such games, we replace the wager $(1,R_n^\circ,X_n^\circ)$, in which exactly one unit is bet, by a compound wager in which the base bet is one unit and, under certain conditions, that bet is incremented during the $n$th coup, for a total bet of $B_n^\circ$.
We let $B_n^*$ denote the base bet at the $n$th coup, and therefore assume
\begin{equation*}
B_n = B_n^* B_n^\circ, \qquad
R_n = B_n^* R_n^\circ, \qquad
X_n = B_n^* X_n^\circ,
\end{equation*}
where $R_n^\circ$ and $X_n^\circ$ represent, respectively, the return and the profit obtained when one places an initial one-unit bet and then increments this bet during the coup.

In this compound setting,
\begin{itemize}
    \item the sequence $(B_n^*)_{n\ge1}$ is predictable with respect to $(\F_n)_{n\ge0}$;
    \item the sequence $(B_n^\circ,R_n^\circ)_{n\ge1}$ is i.i.d.\ and adapted to $(\F_n)_{n\ge0}$, and $(B_n^\circ,R_n^\circ)$ is independent of $\F_{n-1}$ for each $n\ge1$;
     \item the sequence $(B_n^\circ,X_n^\circ)_{n\ge1}$ is i.i.d.\ and adapted to $(\F_n)_{n\ge0}$, and $(B_n^\circ,X_n^\circ)$ is independent of $\F_{n-1}$ for each $n\ge1$.
\end{itemize}

Under these assumptions,
\begin{align*}
\frac{\sum_{k=1}^n \E[R_k\mid\F_{k-1}]}{\sum_{k=1}^n \E[B_k\mid\F_{k-1}]}&=\frac{\sum_{k=1}^n B_k^*\,\E[R_1^\circ]}{\sum_{k=1}^n B_k^*\,\E[B_1^\circ]}=\frac{\E[R_1^\circ]}{\E[B_1^\circ]}=\RTP(B_1^\circ,R_1^\circ),\\
\frac{\sum_{k=1}^n \E[X_k\mid\F_{k-1}]}{\sum_{k=1}^n \E[B_k\mid\F_{k-1}]}&=\frac{\sum_{k=1}^n B_k^*\,\E[X_1^\circ]}{\sum_{k=1}^n B_k^*\,\E[B_1^\circ]}=\frac{\E[X_1^\circ]}{\E[B_1^\circ]}=-\HA(B_1^\circ,X_1^\circ).
\end{align*}

\begin{corollary}\label{cor:one_bet_compound}
Under the above assumptions, suppose also that 
\[
\sup_{n \ge 1} \E[(B_n^*)^2] < \infty,\qquad\liminf_{n\to\infty}\frac{1}{n}\sum_{k=1}^n B_k^*>0\quad\text{a.s.}, \qquad\text{and} \qquad \E[(B_1^\circ)^2]<\infty.
\]
If $\E[(R_1^\circ)^2] < \infty$, or equivalently $\E[(X_1^\circ)^2] < \infty$, then
    \[
    \lim_{n\to\infty}\mathfrak{R}_n=\RTP(B_1^\circ,R_1^\circ)\quad\text{a.s.} \qquad \text{and} \qquad \lim_{n\to\infty}\mathfrak{X}_n=-\HA(B_1^\circ,X_1^\circ)\quad\text{a.s.}
        \]
\end{corollary}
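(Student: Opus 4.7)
The plan is to reduce the statement to a direct application of Theorem~\ref{theorem:main}(b), since the ratio of conditional expectations has already been computed in the display just above the corollary to be exactly $\RTP(B_1^\circ, R_1^\circ)$ (and $-\HA(B_1^\circ, X_1^\circ)$). The only real work is to verify the three hypotheses of Theorem~\ref{theorem:main}, which I would check one by one.

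For the second-moment conditions, I would exploit the $\F_{n-1}$-measurability of $B_n^*$ together with the independence of $(B_n^\circ, R_n^\circ, X_n^\circ)$ from $\F_{n-1}$ to factor
\[
\E[B_n^2] = \E[(B_n^*)^2]\,\E[(B_1^\circ)^2], \qquad \E[R_n^2] = \E[(B_n^*)^2]\,\E[(R_1^\circ)^2],
\]
and analogously for $X_n$. The three assumed moment bounds then give $\sup_n \E[B_n^2] < \infty$ and $\sup_n \E[R_n^2] < \infty$ at once. For the liminf condition on the cumulative total bets, the key observation is that in a compound wager the base stake of one unit is only ever augmented during the coup, so $B_n^\circ \ge 1$ almost surely; hence $B_n = B_n^* B_n^\circ \ge B_n^*$ for every $n$, and the hypothesis $\liminf_n \frac{1}{n}\sum_{k=1}^n B_k^* > 0$ a.s.\ passes directly to $(B_n)_{n\ge 1}$.

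With the hypotheses of Theorem~\ref{theorem:main} verified, the conclusion follows by invoking part~(b) with $\rho' = \rho'' = \RTP(B_1^\circ, R_1^\circ)$ (and independently with $\chi' = \chi'' = -\HA(B_1^\circ, X_1^\circ)$), noting that on the event $\{B_n^* = 0\}$ the ratio $\E[R_n \mid \F_{n-1}]/\E[B_n \mid \F_{n-1}]$ is of the form $0/0$ and is covered by the convention in footnote~\ref{note1}. I do not foresee any real obstacle: once the compound wager is encoded as $(B_n^* B_n^\circ,\, B_n^* R_n^\circ,\, B_n^* X_n^\circ)$, the main theorem carries out the heavy lifting, and the only point that requires a moment's thought is the observation $B_n^\circ \ge 1$ that transfers the liminf assumption from the base bets $B_n^*$ to the total bets $B_n$.
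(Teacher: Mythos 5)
Your proposal is correct and follows exactly the route the paper intends: the corollary is stated without proof precisely because it is a direct application of Theorem~\ref{theorem:main}(b) via the displayed identity $\E[R_k\mid\F_{k-1}]/\E[B_k\mid\F_{k-1}]=\E[R_1^\circ]/\E[B_1^\circ]$, and your verification of the hypotheses (factoring the second moments using predictability of $B_n^*$ and independence of $(B_n^\circ,R_n^\circ)$ from $\F_{n-1}$, and using $B_n^\circ\ge1$ to transfer the liminf condition from $B_n^*$ to $B_n$) supplies exactly the details the paper leaves implicit.
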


\begin{remark}
For compound games, there is some controversy concerning the definitions of the RTP and the house advantage.  
One approach considers the \textit{total bet} placed during a round, while another evaluates these quantities only with respect to the \textit{base bet}.  
This distinction is well known in the literature, and explained, for example, in~\citeauthor{HC05} (\citeyear{HC05}, p.~56).
The former seems preferable from the gambler's perspective because it gives him credit for the additional risk he takes, but the latter may be preferable from the casino's perspective for several reasons.

For example, in Three Card Poker (Example~\ref{example:poker}), one could also compute the house advantage as 
\[
\HA(1,X_1^\circ)=-\E[X_1^\circ]=\frac{686{,}689}{20{,}358{,}520}\approx0.033730.
\]

The results in this section and in the next one are stated using the total-bet convention.  
However, the corresponding statements for the base-bet convention follow immediately: one simply replaces the total bet $B_{n}^{\circ}$ corresponding to a base bet $1$ with simply $1$. All the assertions then remain valid under this alternative interpretation.
\end{remark}

\begin{example}[Craps: pass-line bet with 3/4/5-times odds]\label{example:craps} If we restrict the craps gambler to pass-line bets with 3/4/5-times odds, the present framework is sufficient.  

We begin with a brief description of these bets.  Craps is played by rolling a pair of fair dice repeatedly.  With a few exceptions, only the total of the two dice matters, and its distribution is
\[
\pi_k:=\frac{6-|7-k|}{36},\quad k\in\{2,3,4,\ldots,12\},
\]
so a sequence of dice rolls is an i.i.d.\ sequence $(T_n)_{n\ge1}$ with common distribution $\pi$. 
But here we index the filtration not by dice rolls but by pass-line decisions.  The first roll is the \textit{come-out roll}.  If $T_1\in\{7,11\}$ (a \textit{natural}), the pass-line bet is won, and if $T_1\in\{2,3,12\}$ (a \textit{craps number}), the pass-line bet is lost.  Otherwise, $T_1\in \{4,5,6,8,9,10\}$, and the gambler is said to have established the \textit{point} $T_1$.  The pass-line bet requires a geometrically distributed number of additional rolls for its resolution.  If the point $T_1$ reappears before a 7, the bets is won.  If a 7 appears before the point $T_1$ reappears, the bet is lost.  More precisely, with
\[
N:=\begin{cases}1&\text{if $T_1\in\{2,3,7,11,12\}$},\\
\min\{n\ge2:T_n=T_1\text{ or }T_n=7\}&\text{if $T_1\in\{4,5,6,8,9,10\}$}.\end{cases}
\]
the pass-line bet is resolved at the $N$th roll, and the process begins again.

What makes this a compound bet is the fact that, once a point is established, the gambler is permitted to make an additional bet, equal to 3 (resp., 4, 5) times the amount of his pass-line bet if the point is 4 or 10 (resp., 5 or 9, 6 or 8), that the pass-line bet will be won, and this bet pays fair odds, thus is often described as ``free.''  If the point is $k\in\{4,5,6,8,9,10\}$, the conditional probability of winning is $\pi_k/(\pi_k+\pi_7)$ and that of losing is $\pi_7/(\pi_k+\pi_7)$, hence the payoff odds are $\pi_7$ to $\pi_k$, or 2 to 1 if the point is 4 or 10, 3 to 2 of the point is 5 or 9, and 6 to 5 if the point is 6 or 8.

Therefore the total amount bet, if the pass-line bet is one unit, has the form
\[
B_1^\circ:=\begin{cases}1&\text{if $T_1\in\{2,3,7,11,12\}$},\\
                       1+3&\text{if $T_1\in\{4,10\}$},\\
                       1+4&\text{if $T_1\in\{5,9\}$},\\
                       1+5&\text{if $T_1\in\{6,8\}$},\end{cases}
\]
and its mean is
\begin{align*}
\E[B_1^\circ]&=1(\pi_2+\pi_3+\pi_7+\pi_{11}+\pi_{12})+4(\pi_4+\pi_{10})+5(\pi_5+\pi_9)+6(\pi_6+\pi_8)\\
&=34/9.
\end{align*}
The total profit from such a bet has the form
\[
X_1^\circ:=\begin{cases}1&\text{if $T_1\in\{7,11\}$},\\
                       -1&\text{if $T_1\in\{2,3,12\}$},\\
                       1+6&\text{if $T_1\in\{4,5,6,8,9,10\}$ and $T_N=T_1$},\\
                       -(1+3)&\text{if $T_1\in\{4,10\}$ and $T_N=7$},\\
                       -(1+4)&\text{if $T_1\in\{5,9\}$ and $T_N=7$},\\
                       -(1+5)&\text{if $T_1\in\{6,8\}$ and $T_N=7$}.\end{cases}
\]
and its mean is
\[
\E[X_1^\circ]=2\bigg[\pi_7+\pi_{11}+\sum_{k\in\{4,5,6,8,9,10\}}\pi_k\,\frac{\pi_k}{\pi_k+\pi_7}\bigg]-1=-\frac{7}{495}
\]
because the free-odds portion of the bet is fair.

Thus, we find that
\[
\HA(B_1^\circ,X_1^\circ)=\frac{7/495}{34/9}=\frac{7}{1870}\approx0.0037433,
\]
and Corollary~\ref{cor:one_bet_compound} applies.

It is important to realize that, if we wanted to model a more elaborate craps system, for example one that includes pass-line bets and come bets, together with free-odds bets, then we would need a filtration indexed by dice rolls, not pass-line decisions, and in that case future-dependence becomes a significant issue.  We address this in Section~\ref{sec:future-dependence}.
\end{example}

\begin{remark}
Under the assumptions of this section (and of the following), requiring the sequences $(B_n^\circ,R_n^\circ)_{n\ge1}$ and $(B_n^\circ,X_n^\circ)_{n\ge1}$ to be i.i.d.\ means that the gambler may vary the \textit{entry} bet, but must follow a fixed behavior \textit{within} each coup.  
For instance, in craps the gambler must consistently adopt the same rule for free odds bets: always take the maximum, never take them, or decide according to some fixed random mechanism (such as tossing a coin).  
In addition, this intra-coup behavior must be independent of the choice of the initial wager.

This assumption is, of course, not always realistic.  
When it fails, Corollary~\ref{cor:one_bet_compound} no longer applies directly, but one may instead appeal to Theorem~\ref{theorem:main} to obtain analogous conclusions.
\end{remark}

\section{Compound games with multiple bets per coup}\label{sec:compound-multiple}

We want to generalize the preceding section, allowing multiple bets at each coup.  For example, think of six Three Card Poker gamblers, each playing optimally against the same dealer hand but each with his own hand and his own betting system.

Assuming $d$ betting opportunities, we introduce the random vectors
\begin{align*}
\bm B_n^\circ=(B_{n,1}^\circ,\ldots,B_{n,d}^\circ),\qquad \bm B_n^*=(B_{n,1}^*,\ldots,B_{n,d}^*),\\
\bm R_n^\circ=(R_{n,1}^\circ,\ldots,R_{n,d}^\circ),\qquad \bm X_n^\circ=(X_{n,1}^\circ,\ldots,X_{n,d}^\circ),
\end{align*}
where $B_{n,j}^\circ$ represents a one-unit base bet plus any subsequent bet made on the $j$th betting opportunity at the $n$th coup, $R_{n,j}^\circ$ represents the return from the bet of $B_{n,j}^\circ$ on the $j$th betting opportunity at the $n$th coup, $X_{n,j}^\circ$ represents the profit from the bet of $B_{n,j}^\circ$ on the $j$th betting opportunity at the $n$th coup, and $B_{n,j}^*$ represents the amount of the base bet on the $j$th betting opportunity at the $n$th coup.  Then
\begin{equation*}
B_n=\bm B_n^*\bm\cdot\bm B_n^\circ,\quad R_n=\bm B_n^*\bm\cdot\bm R_n^\circ,\quad X_n=\bm B_n^*\bm\cdot\bm X_n^\circ.
\end{equation*}

In the compound setting, 
\begin{itemize}
    \item the sequence $(\bm B_n^*)_{n\ge1}$ is predictable with respect to $(\F_n)_{n\ge0}$;
    \item the sequence $(\bm B_n^\circ,\bm R_n^\circ)_{n\ge1}$ is i.i.d.\ and adapted to $(\F_n)_{n\ge0}$, and $(\bm B_n^\circ,\bm R_n^\circ)$ is independent of $\F_{n-1}$ for each $n\ge1$;
    \item the sequence $(\bm B_n^\circ,\bm X_n^\circ)_{n\ge1}$ is i.i.d.\ and adapted to $(\F_n)_{n\ge0}$, and $(\bm B_n^\circ,\bm X_n^\circ)$ is independent of $\F_{n-1}$ for each $n\ge1$.
\end{itemize}

Under these assumptions,
\begin{align*}
\frac{\sum_{k=1}^n \E[R_k\mid\F_{k-1}]}{\sum_{k=1}^n \E[B_k\mid\F_{k-1}]}&=\frac{\sum_{k=1}^n \bm B_k^*\bm\cdot\E[\bm R_1^\circ]}{\sum_{k=1}^n \bm B_k^*\bm\cdot\E[\bm B_1^\circ]},\\
\frac{\sum_{k=1}^n \E[X_k\mid\F_{k-1}]}{\sum_{k=1}^n \E[B_k\mid\F_{k-1}]}&=\frac{\sum_{k=1}^n \bm B_k^*\bm\cdot\E[\bm X_1^\circ]}{\sum_{k=1}^n \bm B_k^*\bm\cdot\E[\bm B_1^\circ]}.
\end{align*}

\begin{corollary}\label{cor:multiple_bets-compound}
Under the above assumptions, suppose also that 
\[
\sup_{n \ge 1} \E[|\bm B_n^*|^2] < \infty,\qquad\liminf_{n\to\infty}\frac{1}{n}\sum_{k=1}^n \bm B_k^*\bm\cdot\bm B_k^\circ>0\quad\text{a.s.}, \quad \text{and} \quad \E[|\bm B_1^\circ|^2]<\infty.
\]
If $\E[|\bm R_1^\circ|^2] < \infty$, or equivalently $\E[|\bm X_1^\circ|^2] < \infty$, then 
\begin{align*}
&\min_{1\le i \le d}\RTP(B_{1,i}^\circ,R_{1,i}^\circ)\le\liminf_{n\to\infty}\frac{\sum_{k=1}^n \bm B_k^*\bm\cdot\E[\bm R_1^\circ]}{\sum_{k=1}^n \bm B_k^*\bm\cdot\E[\bm B_1^\circ]}=\liminf_{n\to\infty}\mathfrak{R}_n\\
&\qquad{}\le\limsup_{n\to\infty}\mathfrak{R}_n=\frac{\sum_{k=1}^n \bm B_k^*\bm\cdot\E[\bm R_1^\circ]}{\sum_{k=1}^n \bm B_k^*\bm\cdot\E[\bm B_1^\circ]}\le\max_{1\le i \le d}\RTP(B_{1,i}^\circ,R_{1,i}^\circ)\quad\text{a.s.}
\end{align*}
and
\begin{align*}
&-\max_{1\le i \le d}\HA(B_{1,i}^\circ,X_{1,i}^\circ)\le\liminf_{n\to\infty}\frac{\sum_{k=1}^n \bm B_k^*\bm\cdot\E[\bm X_1^\circ]}{\sum_{k=1}^n \bm B_k^*\bm\cdot\E[\bm B_1^\circ]}=\liminf_{n\to\infty}\mathfrak{X}_n\\
&\qquad{}\le\limsup_{n\to\infty}\mathfrak{X}_n=\frac{\sum_{k=1}^n \bm B_k^*\bm\cdot\E[\bm X_1^\circ]}{\sum_{k=1}^n \bm B_k^*\bm\cdot\E[\bm B_1^\circ]}\le-\min_{1\le i \le d}\HA(B_{1,i}^\circ,X_{1,i}^\circ)\quad\text{a.s.}
\end{align*}

The limits exist under either of two additional conditions: 
\begin{itemize}
    \item If $\RTP(B_{1,1},R_{1,1}^\circ)=\cdots=\RTP(B_{1,d},R_{1,d}^\circ)$, or equivalently $\HA(B_{1,1}^\circ,\break X_{1,1}^\circ)=\cdots=\HA(B_{1,d}^\circ,X_{1,d}^\circ)$, then
\[
\lim_{n\to\infty}\mathfrak{R}_n=\RTP(B_{1,1}^\circ,R_{1,1}^\circ)\quad\text{a.s.} \]
and \[\lim_{n\to\infty}\mathfrak{X}_n=-\HA(B_{1,1}^\circ,X_{1,1}^\circ)\quad\text{a.s.}
\]

\item If $\lim_{n\to\infty}\sum_{k=1}^n \bm B_k^*/\sum_{k=1}^n \bm B_k^*\bm\cdot\bm1=\bm w$ a.s., then
\[
\lim_{n\to\infty}\mathfrak{R}_n=\sum_{i=1}^d w_i\,\E[B_{1,i}^\circ]\,\RTP(B_{1,i}^\circ,R_{1,i}^\circ)\bigg/\sum_{i=1}^d w_i\,\E[B_{1,i}^\circ]\quad\text{a.s.}
\]
and 
\[
\lim_{n\to\infty}\mathfrak{X}_n=-\sum_{i=1}^d w_i\,\E[B_{1,i}^\circ]\,\HA(B_{1,i}^\circ,X_{1,i}^\circ)\bigg/\sum_{i=1}^d w_i\,\E[B_{1,i}^\circ]\quad\text{a.s.}
\]
\end{itemize}

\end{corollary}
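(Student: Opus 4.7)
The plan is to reduce Corollary~\ref{cor:multiple_bets-compound} to Theorem~\ref{theorem:main} by (i) verifying the second-moment and positivity hypotheses for $(B_n)$ and $(R_n)$, and (ii) recognizing the resulting ratio of conditional expectations as a convex combination of the per-opportunity RTPs (resp.\ HAs), with weights governed by the base-bet sequence.

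First, I would verify the hypotheses of Theorem~\ref{theorem:main}. Since $B_n = \bm B_n^*\bm\cdot\bm B_n^\circ$, the Cauchy--Schwarz inequality gives $B_n^2 \le |\bm B_n^*|^2\,|\bm B_n^\circ|^2$. Because $\bm B_n^*$ is $\F_{n-1}$-measurable while $\bm B_n^\circ$ is independent of $\F_{n-1}$ with the same distribution as $\bm B_1^\circ$, we obtain $\E[B_n^2] \le \E[|\bm B_n^*|^2]\,\E[|\bm B_1^\circ|^2]$, which is uniformly bounded in $n$ under the stated assumptions. The same argument with $\bm R_n^\circ$ in place of $\bm B_n^\circ$ bounds $\sup_n \E[R_n^2]$. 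The positivity of $\liminf_n \frac{1}{n}\sum_{k=1}^n B_k$ is assumed directly.

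Next, using the conditional-expectation identities displayed just before the corollary, set $\rho_i := \RTP(B_{1,i}^\circ,R_{1,i}^\circ)$ and $\alpha_{n,i} := \E[B_{1,i}^\circ]\,\sum_{k=1}^n B_{k,i}^* \ge 0$, so that
\[
\frac{\sum_{k=1}^n \E[R_k\mid\F_{k-1}]}{\sum_{k=1}^n \E[B_k\mid\F_{k-1}]}
= \frac{\sum_{i=1}^d \rho_i\,\alpha_{n,i}}{\sum_{i=1}^d \alpha_{n,i}}.
\]
This is a convex combination of $\rho_1,\ldots,\rho_d$, hence lies in $[\min_i \rho_i,\max_i \rho_i]$. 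Invoking Theorem~\ref{theorem:main}(a) translates this bound into the claimed two-sided inequalities for $\liminf_n \mathfrak{R}_n$ and $\limsup_n \mathfrak{R}_n$. The argument for $\mathfrak{X}_n$ is identical, with $\bm X_1^\circ$ and $\chi_i := -\HA(B_{1,i}^\circ,X_{1,i}^\circ)$ in place of $\bm R_1^\circ$ and $\rho_i$.

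Finally, the two sufficient conditions for convergence are immediate. If $\rho_1 = \cdots = \rho_d = \rho$, the convex combination is constantly $\rho$, so Theorem~\ref{theorem:main}(a) gives $\mathfrak{R}_n \to \rho$ a.s. If instead $\sum_{k=1}^n \bm B_k^*/\sum_{k=1}^n \bm B_k^*\bm\cdot\bm 1 \to \bm w$ a.s., dividing numerator and denominator of the displayed ratio by $\sum_{k=1}^n \bm B_k^*\bm\cdot\bm 1$ and using continuity of the dot product on $\R^d$ yields the limit $(\bm w\bm\cdot\E[\bm R_1^\circ])/(\bm w\bm\cdot\E[\bm B_1^\circ])$, which, after substituting $\E[R_{1,i}^\circ] = \rho_i\,\E[B_{1,i}^\circ]$, matches the stated formula. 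The only real obstacle is bookkeeping; in particular, one should confirm that the ratios are eventually well-defined, i.e.\ that $\liminf_n \frac{1}{n}\sum_{k=1}^n \bm B_k^*\bm\cdot\E[\bm B_1^\circ] > 0$ a.s., which follows from the Chow-based argument already embedded in the proof of Theorem~\ref{theorem:main} applied to $Y_k = B_k$.
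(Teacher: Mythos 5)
Your proposal is correct and follows the same route the paper intends: the paper leaves this corollary's proof implicit (mirroring the proof of Corollary~\ref{cor:multiple_bets}, which is ``immediate except for the second-moment bound''), namely verify $\sup_n\E[B_n^2]<\infty$ and $\sup_n\E[R_n^2]<\infty$ via Cauchy--Schwarz together with the predictability of $\bm B_n^*$ and the independence of $(\bm B_n^\circ,\bm R_n^\circ)$ from $\F_{n-1}$, then apply Theorem~\ref{theorem:main} to the displayed conditional-expectation ratios, which are convex combinations of the per-opportunity RTPs (resp.\ HAs) with weights proportional to $\E[B_{1,i}^\circ]\sum_{k\le n}B_{k,i}^*$. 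Your additional remarks (the substitution $\E[R_{1,i}^\circ]=\rho_i\,\E[B_{1,i}^\circ]$ for the weighted-limit case, and the positivity of $\liminf_n \frac1n\sum_k\E[B_k\mid\F_{k-1}]$ coming from the Chow step inside Theorem~\ref{theorem:main}) are exactly the bookkeeping the paper omits.
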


\section{Games with future dependence}\label{sec:future-dependence}

In several casino games, gamblers may place new wagers while earlier ones are still unresolved.  
Typical examples include aforementioned roulette with the \textit{en prison} rule and craps.  
In such settings, the sequences $(B_n)_{n\ge1}$, $(R_n)_{n\ge1}$, and $(X_n)_{n\ge1}$ need not be adapted to the natural filtration indexed by spins of the wheel or rolls of the dice; the natural unit of evolution is therefore a full \textit{round} rather than a single coup or spin or roll.

The guiding idea of this section is therefore to work with a filtration $(\F_n)_{n\ge0}$ which encodes the randomness of the game at each spin or roll, but where bets, returns, and profit may be revealed only at the end of the round containing them.  
We assume the existence of a natural subdivision into \textit{rounds}, whose endpoints are stopping times, so that all pending amounts are fully resolved by the end of each round.  
Thus the analysis moves from being ``spin- or roll-based'' to ``round-based.''

More precisely, consider a strictly increasing sequence of almost surely finite stopping times $(\tau_m)_{m\ge0}$ with $\tau_0=0$.  
The $m$th round consists of the spins or rolls $\tau_{m-1}+1,\dots,\tau_m$, and the $\sigma$-algebra corresponding to the completion of this round is given by
\[
\G_m := \F_{\tau_m}, \qquad m\ge0.
\]
Let $L_m=\tau_m-\tau_{m-1}$ denote the random length of the $m$th round, and assume
\[
\sup_{m\ge1}\E[L_m^2]<\infty.
\]

\begin{theorem}\label{thm:round}
Under the above setting, let $(B_n,R_n,X_n)_{n\ge1}$ be a sequence of wagers with uniformly bounded random variables.  
Assume that there exist constants $\rho'$ and $\rho''$ such that
\[
\rho' \le \frac{\E[R_n\mid\F_{n-1}]}{\E[B_n\mid\F_{n-1}]} \le \rho'' \quad \text{a.s.},
\]
or equivalently constants $\chi'$ and $\chi''$ such that
\[
\chi' \le \frac{\E[X_n\mid\F_{n-1}]}{\E[B_n\mid\F_{n-1}]} \le \chi'' \quad \text{a.s.}
\]
for all $n$.
Define the round totals
\[
\overline{B}_m:=\sum_{i=\tau_{m-1}+1}^{\tau_m} B_i,\quad
\overline{R}_m:=\sum_{i=\tau_{m-1}+1}^{\tau_m} R_i,\quad
\overline{X}_m:=\sum_{i=\tau_{m-1}+1}^{\tau_m} X_i.
\]
Assume further that these are $\G_m$-measurable and that
\[
\liminf_{m\to\infty} \frac{1}{m}\sum_{j=1}^m \overline{B}_j > 0 \quad \text{a.s.}
\]
Then, introducing the notation
\[
\overline{\mathfrak{R}}_m:=\frac{\sum_{j=1}^m \overline{R}_j}{\sum_{j=1}^m \overline{B}_j}\quad\text{and}\quad
\overline{\mathfrak{X}}_m:=\frac{\sum_{j=1}^m \overline{X}_j}{\sum_{j=1}^m \overline{B}_j},
\]
we have
\[
\rho' \le \liminf_{m\to\infty} \overline{\mathfrak{R}}_m
\le \limsup_{m\to\infty} \overline{\mathfrak{R}}_m
\le \rho''\quad \text{a.s.}
\]
and 
\[
\chi'  \le\ \liminf_{m\to\infty} \overline{\mathfrak{X}}_m
\le \limsup_{m\to\infty} \overline{\mathfrak{X}}_m
\le \chi''\quad \text{a.s.}
\]
In particular, if $\rho'=\rho''=\rho$, or equivalently $\chi'=\chi''=\chi$, then
\[
\lim_{m\to\infty}\overline{\mathfrak{R}}_m = \rho \quad \text{a.s.} \qquad \text{and}
\qquad
\lim_{m\to\infty} \overline{\mathfrak{X}}_m = \chi
\quad \text{a.s.}
\]
\end{theorem}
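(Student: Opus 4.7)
The plan is to reduce the statement to Theorem~\ref{theorem:main}(b) applied to the round-aggregated wagers $(\overline{B}_m, \overline{R}_m, \overline{X}_m)_{m\ge1}$ with respect to the coarser filtration $(\G_m)_{m\ge0}$. Two things must be verified: first, that the second-moment and $\liminf$ hypotheses of Theorem~\ref{theorem:main}(b) transfer to the round totals; second, that the per-coup conditional-ratio bound on $(\F_n)$ implies the corresponding per-round bound on $(\G_m)$.

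The first verification is essentially mechanical. Uniform boundedness of the wagers by some constant $C$ gives $\overline{B}_m,\,|\overline{R}_m|,\,|\overline{X}_m| \le C L_m$, so
\[
\sup_{m\ge1}\E[\overline{B}_m^2] \le C^2 \sup_{m\ge1}\E[L_m^2] < \infty,
\]
and the analogous bounds hold for $\overline{R}_m$ and $\overline{X}_m$. The $\liminf$ condition on $(\overline{B}_m)$ is directly assumed, and $\G_m$-adaptedness of the triple $(\overline{B}_m, \overline{R}_m, \overline{X}_m)$ is part of the statement.

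The crux is translating the conditional-ratio bound from $(\F_n)$ to $(\G_m)$. Define $D_n := R_n - \rho'' B_n$; the hypothesis gives $\E[D_n \mid \F_{n-1}] \le 0$, so $N_n := \sum_{i=1}^n D_i$ is an $(\F_n)$-supermartingale. I would invoke optional stopping at the pair $\tau_{m-1} \le \tau_m$ to conclude
\[
\E[N_{\tau_m} - N_{\tau_{m-1}} \mid \F_{\tau_{m-1}}] \le 0,
\]
which reads $\E[\overline{R}_m - \rho'' \overline{B}_m \mid \G_{m-1}] \le 0$. The symmetric argument with $\rho' B_n - R_n$ yields the matching lower inequality. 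Since $\overline{B}_m \ge 0$, dividing gives $\rho' \le \E[\overline{R}_m \mid \G_{m-1}] / \E[\overline{B}_m \mid \G_{m-1}] \le \rho''$ a.s., with the $0/0$ convention of footnote~\ref{note1}. Repeating the argument with $X_n$ in place of $R_n$ and $\chi',\chi''$ in place of $\rho',\rho''$ gives the $\G_{m-1}$-version of the profit-ratio bound.

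The main obstacle is the optional-stopping step, since the $\tau_m$ are unbounded. Here the assumption $\sup_{m\ge1}\E[L_m^2] < \infty$ is essential: together with $|D_n| \le (1+|\rho''|)C$, it yields $|N_{\tau_m} - N_{\tau_{m-1}}| \le (1+|\rho''|)C\,L_m$, an integrable dominator. The standard truncation $\tau_m \wedge k$ produces bounded stopping times to which the supermartingale inequality applies directly, and dominated convergence for conditional expectations as $k \to \infty$ upgrades it to the unbounded case. With the per-round bounds in hand, Theorem~\ref{theorem:main}(b) applied to $(\overline{B}_m, \overline{R}_m, \overline{X}_m)_{m\ge1}$ with filtration $(\G_m)_{m\ge0}$ yields the stated $\liminf$/$\limsup$ inequalities for $\overline{\mathfrak{R}}_m$ and $\overline{\mathfrak{X}}_m$. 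The ``in particular'' statement follows by squeezing when $\rho'=\rho''=\rho$ or $\chi'=\chi''=\chi$.
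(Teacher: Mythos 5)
Your proposal is correct and follows essentially the same route as the paper: reduce to Theorem~\ref{theorem:main}(b) for the round totals on the filtration $(\G_m)_{m\ge0}$, bound the second moments via $\sup_m\E[L_m^2]<\infty$, and transfer the per-coup conditional-ratio bound to a per-round one. The only cosmetic difference is that you package the transfer step as optional stopping of the supermartingale $\sum_i(R_i-\rho''B_i)$ (justified by truncation and domination by $L_m$), whereas the paper carries out the equivalent term-by-term tower-property computation $\E[\overline{R}_m\mid\G_{m-1}]=\sum_{j}\E[\mathbf 1_{\{\tau_m>j-1\}}\E[R_j\mid\F_{j-1}]\mid\G_{m-1}]$ directly.
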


\begin{proof}
We apply Theorem~\ref{theorem:main} to $(\G_m)$ and $(\overline{B}_m,\overline{R}_m,\overline{X}_m)$.  
Since $\overline{R}_m\le rL_m$ and $\sup_{m\ge1}\E[L_m^2]<\infty$, we have $\sup_{m\ge1}\E[\overline{R}_m^2]<\infty$, and similarly for $\overline{B}_m$.  
Write
\[
\overline{B}_m=\sum_{j>\tau_{m-1}} B_j\,\mathbf 1_{\{\tau_m>j-1\}}, \qquad \overline{R}_m=\sum_{j>\tau_{m-1}} R_j\,\mathbf 1_{\{\tau_m>j-1\}}.
\]
We compute, by the tower property and the fact that $\mathbf 1_{\{\tau_m>j-1\}}$ is $\F_{j-1}$-measurable, the following bound:
\begin{align*}
    \E[\overline{R}_m\mid \G_{m-1}]
    &=\sum_{j > \tau_{m-1}+1}\E[\mathbf 1_{\{\tau_m>j-1\}}\,\E[R_j\mid\F_{j-1}]\mid \G_{m-1}]\\
    &\le \sum_{j > \tau_{m-1}+1}\E[\mathbf 1_{\{\tau_m>j-1\}}\,\E[B_j\mid\F_{j-1}]\, \rho''\mid \G_{m-1}]\\
    &=\E[\overline{B}_m\mid \G_{m-1}]\, \rho''.
\end{align*}
We find therefore that
\[
\frac{\E[\overline{R}_m\mid\G_{m-1}]}{\E[\overline{B}_m\mid\G_{m-1}]} \le \rho''.
\]
The lower bound is similar, and the same applies to $(\overline{X}_m)_{m\ge 1}$.  
The theorem now follows.
\end{proof}

\begin{remark}
The previous theorem is formulated in terms of the round-based quantities $\overline{\mathfrak{R}}_n$ and $\overline{\mathfrak{X}}_n$, reflecting the natural decomposition of the game into rounds.
However, since the round lengths have uniformly bounded second moments, the discrepancy between round-based and spin- or roll-based totals can be proved to be negligible in the long term.
Consequently, the same asymptotic bounds hold when $\overline{\mathfrak{R}}_n$ and $\overline{\mathfrak{X}}_n$ are replaced by the usual ratios $\mathfrak{R}_n$ and $\mathfrak{X}_n$.
\end{remark}

\begin{example}[Roulette with \textit{en prison}]
We revisit roulette with the \textit{en prison} rule.  
Now gamblers may also place bets during the spins required to resolve the even chances following a zero outcome, so the filtration is necessarily indexed by the individual spins.  Rounds are what we referred to as coups in Section~\ref{sec:Leigh}.  Round lengths are geometrically distributed with parameter $36/37$ and therefore have finite second moment.
\end{example}

\begin{example}[Craps]
In Example~\ref{example:craps} we indexed the filtration by pass-line decisions.  
If we also allow come bets (with $3/4/5$-times odds), which are mathematically equivalent to the pass-line but initiated on rolls other than come-out rolls and resolved possibly before or after the pending pass-line bet, then the filtration must instead be indexed by the dice rolls themselves.  
In this formulation, $B_n$ and $X_n$ at roll $n$ need not be $\F_n$-measurable, but the conditional expectation formula remains unchanged:
\[
\frac{\E[X_n\mid\F_{n-1}]}{\E[B_n\mid\F_{n-1}]} 
= \frac{\E[X_n^{\circ}\mid\F_{n-1}]}{\E[B_n^{\circ}\mid\F_{n-1}]} 
= -\HA(B_n^{\circ},X_n^{\circ}) 
= -\frac{7}{1870}.
\]

When a pass-line bet is lost by rolling a 7, that event is called a \textit{seven-out}.
In craps, a natural division into rounds is given by the seven-out events, in which a new shooter is selected, and each pending wager is resolved.
Specifically, $(\tau_m)_{m\ge 0}$ is defined as follows:
\begin{equation*}
    \tau_0 := 0, \quad \tau_m := \inf\{n> \tau_{m-1}: \text{the shooter sevens out on the $n$th roll}\}.
\end{equation*}
The random variables $L_m := \tau_m - \tau_{m-1}$ are i.i.d., and their description is given in \citeauthor{E10} (\citeyear{E10}, Section 15.2); in particular their common second moment is finite, and this is enough to apply Theorem~\ref{thm:round} to derive the following: 
\[
\lim_{m\to\infty}\overline{\mathfrak{X}}_m=-\frac{7}{1870},
\]
even though wagers may require additional rolls before being fully resolved.
\end{example}

\begin{example}[Slot machines with accumulation features]
Consider a slot machine where part of each spin’s return is accumulated into a bonus pool paid out every $s$ spins.  
Each bet $B_n$ is immediately measurable, while each return $R_n$ consists of an instantly paid portion plus a deferred portion released during the bonus round. This setting suggests a natural division into rounds of fixed length $L_m=s$, and Theorem~\ref{thm:round} applies directly.
\end{example}

\end{document}